\documentclass[preprint,18pt,nopreprintline]{elsarticle}

\usepackage{amsmath,amssymb,amsfonts,amsthm,mathtools}
\usepackage{graphicx}
\usepackage{hyperref}
\usepackage{enumerate}
\usepackage{tikz}
\usetikzlibrary{positioning,arrows.meta}

\newtheorem{theorem}{Theorem}[section]
\newtheorem{lemma}[theorem]{Lemma}
\newtheorem{corollary}[theorem]{Corollary}
\newtheorem{proposition}[theorem]{Proposition}
\newtheorem{remark}[theorem]{Remark}
\newtheorem{question}[theorem]{Question}
\newtheorem{conjecture}[theorem]{Conjecture}
\newtheorem{definition}[theorem]{Definition}
\newtheorem{example}[theorem]{Example}
\newtheorem{problem}[theorem]{Problem}

\def\bnum{\begin{enumerate}\itemsep=0cm}
\def\enum{\end{enumerate}}
\def\bdf{\begin{definition}\rm }
\def\edf{\end{definition}}
\def\br{\begin{remark}\rm }
\def\er{\end{remark}}
\def\be{\begin{equation}}
\def\ee{\end{equation}}
\def\bt{\begin{theorem}}
\def\et{\end{theorem}}
\def\bl{\begin{lemma}}
\def\el{\end{lemma}}
\def\bc{\begin{corollary}}
\def\ec{\end{corollary}}
\def\bp{\begin{proposition}}
\def\ep{\end{proposition}}
\def\bxa{\begin{example}\rm }
\def\exa{\end{example}}
\def\ba{\begin{array}}
\def\ea{\end{array}}
\def\ben{\begin{eqnarray*}}
\def\een{\end{eqnarray*}}
\def\bdsc{\begin{description}}
\def\edsc{\end{description}}
\def\bpsp{\begin{pspicture}}
\def\epsp{\end{pspicture}}
\def\bea{\begin{eqnarray}}
\def\eea{\end{eqnarray}}
\def\btab{\begin{tabular}}
\def\etab{\end{tabular}}
\def\bpm{\begin{problem}}
\def\epm{\end{problem}}
\def\bfig{\begin{figure}}
\def\efig{\end{figure}}

\def \qe{\hfill \vrule height4pt width 4pt depth 0pt}

\def\1{1\!\hspace{-.08cm}1}

\begin{document}
\begin{frontmatter}

\title{Characterizing uniform hypergraphs via Seidel matrix and Seidel energy}

\author[biu]{Alexander Guterman}
\ead{alexander.guterman@biu.ac.il}

\author[biu]{Shib Sankar Saha\corref{cor1}}
\ead{sahashi@biu.ac.il}

\cortext[cor1]{Corresponding author}
\address[biu]{Department of Mathematics, Bar-Ilan University, Ramat-Gan, 5290002, Israel}

\begin{abstract}
The Seidel energy is defined as the sum of the absolute values of the eigenvalues of the Seidel matrix of a hypergraph. We first characterize the $k$-uniform hypergraphs of fixed order $n$ with minimum and maximum Frobenius norms of their Seidel matrix and then derive bounds for the Seidel energy. We then investigate a hypergraph analogue of \emph{Haemers' Conjecture}, showing that the complete $k$-uniform hypergraph does not minimize Seidel energy. Motivated by the theory of hypoenergetic and non-hypoenergetic graphs, we define Seidel hypoenergetic and Seidel non-hypoenergetic hypergraphs and prove that almost all $k$-uniform hypergraphs are Seidel non-hypoenergetic, where $k\ge3$.
\end{abstract}

\begin{keyword}
Hypergraph \sep Seidel matrix \sep Seidel energy \sep Frobenius norm \sep random hypergraph
\MSC[2020] 05C65 \sep 05C35 \sep 05C50 \sep 15A18 \sep 15A60
\end{keyword}
\end{frontmatter}

\sloppy
\section{Introduction}
Spectral invariants connect the combinatorial structure of a discrete object with the eigenvalues of associated matrices. In graph theory, besides the adjacency and Laplacian matrices, the Seidel matrix has played an important role in switching theory \cite{WH2012}, equiangular lines \cite{S1996}, and extremal spectral problems \cite{S2020,E2024}. For a graph $G$ of order $n$, the Seidel matrix is $S(G)=J_n-I_n-2A(G)$, where $A(G)$ is the adjacency matrix of $G$, and $J_n$ and $I_n$ are the all-ones and identity matrices of order $n$, respectively. Haemers \cite{WH2012} introduced the Seidel energy of $G$ as the sum of the absolute values of the eigenvalues of $S(G)$ of a graph $G$, denoted by $\mathcal{E}(S(G))$, and also obtained a sharp upper bound for graphs of fixed order. Since then, the Seidel matrix and Seidel energy have been studied from several perspectives; for example, Sz\"oll\H{o}si and \"Osterg{\aa}rd \cite{F2018} considered the enumeration of the Seidel matrix. Berman, Shaked-Monderer, and Singh \cite{A2019} studied complete multipartite graphs determined up to switching by their Seidel spectrum, and Rizzolo \cite{D2019} investigated determinants of the Seidel matrix in connection with a Conjecture of Ghorbani.

For hypergraphs, Feng, Ch'ing, and Li \cite{F1996} introduced an adjacency matrix, and Cardoso, Vecchio, Portugal, and Trevisan \cite{K2022} extended the notion of adjacency energy to hypergraphs. Later, Saha and Panda \cite{SSS2025} studied the effect of hyperedge deletion on hypergraph energy and obtained several bounds in terms of structural parameters. These developments make it natural to investigate a Seidel-type energy for hypergraphs through the adjacency matrix $\mathcal{A}(\mathcal{H})$. More broadly, spectral hypergraph theory has developed along both tensor-based and matrix-based directions. Cooper and Dutle \cite{cooper2012spectra} initiated the tensor approach, while many later works focused on matrix approaches such as adjacency \cite{K2022, SSS2025}, Laplacian \cite{S2022, S2025}, normalized Laplacian \cite{LX2024}, and signless Laplacian matrices \cite{RB2025, Saha2026}. In comparison, the Seidel perspective for hypergraphs is still far less developed. This direction is both natural and challenging, because in a $k$-uniform hypergraph a single hyperedge simultaneously contributes to many vertex pairs through their co-degrees, and hence the Seidel matrix reflects higher-order interactions absent in ordinary graphs. In this paper, we study this hypergraph Seidel matrix from three perspectives: extremal Frobenius norms, extremal Seidel energy, and random $k$-uniform hypergraphs.

The first part of the article concerns the extremal Frobenius norm of their Seidel matrix of hypergraphs. We characterize the $k$-uniform hypergraph that attains the minimum Frobenius norm of the Seidel matrix among all $k$-uniform hypergraphs with the fixed order $n$, and characterize the $k$-uniform hypergraph that attains the maximum Frobenius norm of the Seidel matrix among $k$-uniform hypergraphs with the fixed order $n$. Using the above characterization, we establish lower and upper bounds on their Seidel energy.

The second part of the article studies a hypergraph analogue of \emph{Haemers' Conjecture}. In the graph setting, \emph{Haemers' Conjecture} asserts that among all graphs of fixed order, the complete graph minimizes Seidel energy, and this was later proved by Akbari, Einollahzadeh, Karkhaneei, and Nematollahi \cite{S2020} and, later, with a different approach by Einollahzadeh and Nematollahi \cite{E2024}. It is therefore natural to ask whether the same phenomenon persists for $k$-uniform hypergraphs. We show that this graph-theoretic behavior does not extend to the hypergraph setting: in general, the complete $k$-uniform hypergraph need not minimize Seidel energy among all $k$-uniform hypergraphs of the same order, and thus shows that the hypergraph analogue of \emph{Haemers’ Conjecture} fails. As a part of this analysis, we also study the notion of a Seidel-equitable vertex partition for hypergraphs and the corresponding Seidel quotient matrix, which provides a useful tool for computing Seidel spectra of structured classes of uniform hypergraphs.

The final part of the article is probabilistic in nature and is devoted to the theory of hyperenergetic hypergraphs. In 2007, Nikiforov \cite{VN2007} obtained the asymptotic behavior of the energy of a graph on $n$ vertices in the Erd\H{o}s-R\'enyi random graph model $G(n,p)$. In 2011, using Nikiforov's energy bound, Gutman \cite{IG2011} gave a characterization of non-hypoenergetic graphs in the random graph model $G(n,p)$. Motivated by the classical notions of hypoenergetic and non-hypoenergetic graphs, we introduce the concepts of Seidel hypoenergetic and Seidel non-hypoenergetic hypergraphs, and characterize such $k$-uniform hypergraphs in the random hypergraph model $\mathcal{R}(n,k,p)$, the natural hypergraph analogue of the Erd\H{o}s-R\'enyi model $G(n, p)$.

The article is organized as follows. In Section 2, we introduce the notation and preliminaries used throughout the article. In Section 3, we characterize the extremal Frobenius norms of their Seidel matrix and establish bounds for the Seidel energy of hypergraphs. In Section 4, we study the notion of a Seidel-equitable vertex partition for hypergraphs and use the corresponding quotient matrix to derive Seidel spectral information for structured examples. We then investigate a hypergraph analogue of \emph{Haemers' Conjecture}, showing that the complete $k$-uniform hypergraph does not minimize Seidel energy, in general. In Section 5, we introduce Seidel hypoenergetic and Seidel non-hypoenergetic $k$-uniform hypergraphs and, in the random hypergraph model $\mathcal{R}(n,k,p)$, prove that almost all $k$-uniform hypergraphs are Seidel non-hypoenergetic, where $k\ge3$. In Section 6, we conclude with several open problems.

\section{Preliminaries}
In this section, we collect the notation and basic facts used throughout the article.

For any $k\in\mathbb{Z}^+$ and any set $X$, let $[k]=\{1,2,\ldots,k\}$ and
\[
\binom{X}{k}=\{Y\subseteq X:\ |Y|=k\}.
\]
Also, $\binom{|X|}{k}$ denotes the binomial coefficient, that is, the number of ways to choose $k$ elements from a set $X$ of size $|X|$ without regard to order.

A hypergraph $\mathcal{H}=\big(V(\mathcal{H}),E(\mathcal{H})\big)$ consists of a vertex set $V(\mathcal{H})$ (or simply $V$) and a hyperedge set $E(\mathcal{H})$ (or simply $E$), where $E(\mathcal{H})\subseteq 2^{V(\mathcal{H})}$. For an undirected hypergraph, each hyperedge is an unordered set. A hypergraph $\mathcal{H}$ is said to be {\em finite} if $V(\mathcal{H})$ is a finite set, and the quantity $|V(\mathcal{H})|=n$ is called the {\em order} of $\mathcal{H}$. A hypergraph is $k$-uniform if every hyperedge has size $k$, and a $k$-uniform hypergraph is also called a $k$-graph. In particular, a graph can be regarded as a $2$-uniform hypergraph.

A hypergraph $\mathcal{H}$ is said to be \emph{simple} if it contains no hyperedge of size $1$ and for any distinct $e_i,e_j\in E(\mathcal{H})$, neither $e_i\subset e_j$ nor $e_j\subset e_i$ holds. Let $\mathcal{H}'=\big(V(\mathcal{H}'), E(\mathcal{H}')\big)$ be a hypergraph such that $V(\mathcal{H}')\subseteq V(\mathcal{H})$ and $E(\mathcal{H}') \subseteq E(\mathcal{H})$. Then $\mathcal{H}'$ is called a \emph{subhypergraph} of $\mathcal{H}$.

We use $i\sim j$ to denote that vertices $i$ and $j$ are adjacent. A hypergraph $\mathcal{H}$ is said to be {\em connected} if for any two distinct vertices $i$ and $j$, there exists a sequence of hyperedges $e_1,e_2,\ldots,e_m$ such that $i\in e_1$, $j\in e_m$, and $e_r\cap e_{r+1}\neq\emptyset$ for all $1\le r\le m-1$. For a hypergraph $\mathcal{H}$, let $E_{ij}=\{e\in E(\mathcal{H}): i,j\in e\}$. Then the co-degree of the pair $i,j$ is defined by $c_{ij}=|E_{ij}|$. Throughout the article, all hypergraphs are assumed to be finite and simple unless stated otherwise. Connectedness will be assumed explicitly when needed. For more details on hypergraphs, see \cite{Berge1989, bretto2013, V2009}.

In this article, we study the Seidel matrix of a hypergraph. The natural way to define the Seidel matrix of a hypergraph $\mathcal{H}$ is the following: 
\[
\mathcal{S}(\mathcal{H})=J_n-I_n-2\mathcal{A}(\mathcal{H}),
\]
where $\mathcal{A}(\mathcal{H})$ is the adjacency matrix of $\mathcal{H}$ \cite{F1996}, and $J_n$ and $I_n$ are the all-ones matrix and the identity matrix of order $n$, respectively. Thus, the entries of the Seidel matrix $\mathcal{S}(\mathcal{H})=[\mathcal{S}(\mathcal{H})_{ij}]_{n\times n}$ are given by
\[
\mathcal{S}(\mathcal{H})_{ij}=
\begin{cases}
1-2c_{ij}, & \text{if } i\ne j,\\
0, & \text{if } i=j.
\end{cases}
\]

Let $\rho_1(\mathcal{H}),\rho_2(\mathcal{H}),\dots,\rho_n(\mathcal{H})$ be the eigenvalues of $\mathcal{S}(\mathcal{H})$. These are called the Seidel eigenvalues of $\mathcal{H}$. Since $\mathcal{S}(\mathcal{H})$ is a real symmetric matrix, all of its eigenvalues are real. The notation $\operatorname{Spec}(\mathcal{S}(\mathcal{H}))$
denotes the spectrum of $\mathcal{S}(\mathcal{H})$, the multiset of all Seidel eigenvalues of $\mathcal{H}$.

The \emph{Seidel energy} of a hypergraph $\mathcal{H}$ is denoted by $\mathcal{E}(\mathcal{S}(\mathcal{H}))$ and is defined as the sum of the absolute values of all Seidel eigenvalues of $\mathcal{H}$ \cite{LJK2023, Sahaaa2026},
\[
\mathcal{E}(\mathcal{S}(\mathcal{H}))=\sum_{i=1}^n |\rho_i(\mathcal{H})|.
\]

\section{Extremal Frobenius norms of Seidel matrices of hypergraphs and bounds for Seidel energy}
Let $M = (m_{ij})$ be an $n\times n$ real or complex matrix. The \emph{Frobenius norm} of $M$ is defined by
\[
\|M\|_{F} 
= \left( \sum_{i=1}^{n} \sum_{j=1}^{n} |m_{ij}|^{2} \right)^{\frac{1}{2}}
\]

\subsection{Minimum Frobenius norm of the Seidel Matrix of hypergraphs}
The following lemma establishes a relation between the Frobenius norm of the Seidel matrix and the co-degree of hypergraphs. 

\begin{lemma}\label{lem:Frob}
Let $\mathcal{H}=\big(V(\mathcal{H}), E(\mathcal{H})\big)$ be a $k$-uniform hypergraph with $n$ vertices. Then
\[
\|\mathcal{S}(\mathcal{H})\|_F= \bigg( n(n-1) + 4\!\sum_{i\neq j} c_{ij}^2 - 4\!\sum_{i\neq j} c_{ij}\bigg)^{\frac{1}{2}}.
\]
Moreover,
\[
\sum_{i<j} c_{ij}=|E(\mathcal{H})|\binom{k}{2},
\qquad
\sum_{i\neq j} c_{ij}=2|E(\mathcal{H})|\binom{k}{2},
\]
where $|E(\mathcal{H})|$ is the number of hyperedges in $\mathcal{H}$.
\end{lemma}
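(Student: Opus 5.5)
The plan is to compute the Frobenius norm straight from the entrywise description of $\mathcal{S}(\mathcal{H})$ given in the Preliminaries, and then to get the co-degree sums by a double count.

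\textbf{Step 1 (Frobenius norm).} The diagonal entries of $\mathcal{S}(\mathcal{H})$ are zero, so only the $n(n-1)$ ordered pairs $(i,j)$ with $i\neq j$ contribute. For each such pair the entry is $1-2c_{ij}$. Since all entries are real, squaring and expanding gives $(1-2c_{ij})^2=1-4c_{ij}+4c_{ij}^2$. Summing over the $n(n-1)$ off-diagonal positions yields
\[
\|\mathcal{S}(\mathcal{H})\|_F^2=n(n-1)+4\sum_{i\neq j}c_{ij}^2-4\sum_{i\neq j}c_{ij}.
\]
Taking square roots gives the first claim. The right-hand side is automatically nonnegative, because it is a sum of squares.

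\textbf{Step 2 (co-degree sum).} We double count the incidences between unordered vertex pairs $\{i,j\}$ and hyperedges $e$ with $\{i,j\}\subseteq e$, that is, the set
\[
\bigl\{(\{i,j\},e): e\in E(\mathcal{H}),\ \{i,j\}\subseteq e\bigr\}.
\]
\emph{Counting by pairs.} By the definition of $E_{ij}$ and $c_{ij}=|E_{ij}|$, the pair $\{i,j\}$ lies in exactly $c_{ij}$ hyperedges. So the number of incidences is $\sum_{i<j}c_{ij}$.

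\emph{Counting by hyperedges.} Each hyperedge has exactly $k$ vertices by $k$-uniformity, so it contains exactly $\binom{k}{2}$ unordered pairs. So the number of incidences is $|E(\mathcal{H})|\binom{k}{2}$.

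Equating the two counts gives $\sum_{i<j}c_{ij}=|E(\mathcal{H})|\binom{k}{2}$. Finally, $c_{ij}=c_{ji}$ by symmetry of the definition, so the sum over ordered pairs is twice the sum over unordered pairs. This gives $\sum_{i\neq j}c_{ij}=2|E(\mathcal{H})|\binom{k}{2}$.

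There is no real obstacle here. The only point needing care is keeping ordered and unordered pairs straight, that is, the factor of $2$ in the last identity. One could also substitute this identity into Step 1 to write the norm as
\[
\|\mathcal{S}(\mathcal{H})\|_F^2=n(n-1)+4\sum_{i\neq j}c_{ij}^2-8|E(\mathcal{H})|\binom{k}{2},
\]
though this substitution is not needed for the statement.
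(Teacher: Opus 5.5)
Your proposal is correct and follows essentially the same route as the paper: you expand $(1-2c_{ij})^2$ over the $n(n-1)$ off-diagonal positions, then count pair--hyperedge incidences, with each hyperedge contributing $\binom{k}{2}$ pairs, and use $c_{ij}=c_{ji}$ for the factor of $2$. Your explicit double-counting framing simply spells out the paper's one-line observation.
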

\proof By the definition of the Seidel matrix of a hypergraph, we have
\[
\mathcal{S}(\mathcal{H})_{ii}=0,\qquad
\mathcal{S}(\mathcal{H})_{ij}=1-2c_{ij}\quad(i\neq j).
\]
From the Frobenius norm
\[
\|\mathcal{S}(\mathcal H)\|_F=\bigg(\sum_{i,j} \mathcal{S}(\mathcal{H})_{ij}^2\bigg)^{\frac{1}{2}}=\bigg(\sum_{i\neq j}(1-2c_{ij})^2\bigg)^{\frac{1}{2}}.
\]
Hence 
\[
\|\mathcal{S}(\mathcal{H})\|_F=\bigg( n(n-1) + 4\!\sum_{i\neq j} c_{ij}^2 - 4\!\sum_{i\neq j} c_{ij}\bigg)^{\frac{1}{2}}.
\]
Observe that each hyperedge $e\in E(\mathcal H)$ contributes exactly $\binom{k}{2}$ unordered vertex pairs $\{i,j\}\subset e$.

Therefore
\[
\sum_{i<j} c_{ij}
=\sum_{e\in E(\mathcal H)} \binom{k}{2}
=|E(\mathcal H)|\binom{k}{2}.
\]
Since $c_{ij}=c_{ji}$ implies that $\sum\limits_{i\neq j} c_{ij}=2|E(\mathcal H)|\binom{k}{2}$.\\
This completes the proof.\qe

\begin{definition}\rm\cite[Page 3]{bretto2013}\label{D:linear}
A hypergraph $\mathcal{H}=\big(V(\mathcal{H}), E(\mathcal{H})\big)$ is \emph{linear} if it is simple and $|e_i \cap e_j| \leq 1$ for all $e_i, e_j \in E(\mathcal{H})$ with $e_i \neq e_j$, i.e., if no two hyperedges share more than one vertex.
\end{definition}
The following theorem establishes that, among all 
$k$-uniform hypergraphs of fixed order $n$, the Frobenius norm of the Seidel matrix is minimized by the linear $k$-uniform hypergraph.

\begin{theorem}\label{lem:lin}
Let $\mathcal{H}$ be a $k$-uniform hypergraph with $n$ vertices. Then  
$$\|\mathcal{S}(\mathcal{H})\|_F\ge\sqrt{n(n-1)},$$
and equality if and only if $\mathcal{H}$ is linear.
\end{theorem}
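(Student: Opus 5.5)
The plan is to start from the formula in Lemma~\ref{lem:Frob} and rewrite the excess over $n(n-1)$ as a sum of nonnegative terms:
\[
\|\mathcal{S}(\mathcal{H})\|_F^2-n(n-1)=4\sum_{i\neq j}\bigl(c_{ij}^2-c_{ij}\bigr)=4\sum_{i\neq j}c_{ij}(c_{ij}-1).
\]
Each co-degree $c_{ij}$ is a nonnegative integer, so $c_{ij}(c_{ij}-1)\ge 0$. Indeed, the product vanishes when $c_{ij}\in\{0,1\}$ and is at least $2$ when $c_{ij}\ge 2$. Summing gives $\|\mathcal{S}(\mathcal{H})\|_F^2\ge n(n-1)$, and taking square roots yields the inequality. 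Equivalently, one can use the entry formula directly: $(1-2c_{ij})^2\ge 1$ for every integer $c_{ij}$, because $1-2c_{ij}$ is an odd integer.

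For the equality case, the computation above shows that equality holds if and only if $c_{ij}\le 1$ for all pairs $i\neq j$. This condition says that no pair of distinct vertices lies in two distinct hyperedges.

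It remains to show that this is equivalent to linearity in the sense of Definition~\ref{D:linear}, recalling that all hypergraphs are assumed simple.
\begin{itemize}
\item Suppose $\mathcal{H}$ is not linear. Then there are distinct hyperedges $e_1,e_2$ with $|e_1\cap e_2|\ge 2$. Pick $i\neq j$ in $e_1\cap e_2$; then $c_{ij}\ge 2$.
\item Conversely, suppose $c_{ij}\ge 2$. Then two distinct hyperedges both contain $i$ and $j$, so they share at least two vertices and $\mathcal{H}$ is not linear.
\end{itemize}

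There is no real obstacle here. The only point needing care is the integrality of $c_{ij}$, which rules out values strictly between $0$ and $1$ and makes the term-by-term bound valid. When $k=1$ no pairs occur, but simplicity excludes size-one edges anyway, and the statement is trivially consistent.
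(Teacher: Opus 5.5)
Your proposal is correct and matches the paper's proof: both use Lemma~\ref{lem:Frob} to reduce the claim to the termwise inequality $c_{ij}^2\ge c_{ij}$ for nonnegative integers, with equality if and only if every $c_{ij}\in\{0,1\}$. Your explicit check that this co-degree condition is equivalent to linearity, given the standing simplicity assumption, spells out a step the paper only asserts.
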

\proof For each unordered pair $\{i,j\}$, the co-degree $c_{ij}$ is a nonnegative integer.  Hence, $c_{ij}^2 \ge c_{ij}$ with equality if and only if $c_{ij}\in\{0,1\}$.

Summing over all unordered pairs,
\[
\sum_{i<j} c_{ij}^2 \;\ge\; \sum_{i<j} c_{ij},
\]
and since $c_{ij}=c_{ji}$, finally we have 
\begin{equation}\label{linear1}
\sum_{i\neq j} c_{ij}^2 \;\ge\; \sum_{i\neq j} c_{ij},
\end{equation}
with equality if and only if every $c_{ij}\in\{0,1\}$.

By Lemma \ref{lem:Frob} we have,
\[
\|\mathcal{S}(\mathcal H)\|_F
=\bigg( n(n-1) + 4\!\sum_{i\neq j} c_{ij}^2 - 4\!\sum_{i\neq j} c_{ij}\bigg)^{\frac{1}{2}}
\]
Using equation \eqref{linear1}, $\|\mathcal{S}(\mathcal H)\|_F\ge\sqrt{n(n-1)}$. Equality holds if and only if $\sum\limits_{i\neq j} c_{ij}^2=\sum\limits_{i\neq j} c_{ij}$, i.e., if and only if all $c_{ij}\in\{0,1\}$, which is precisely Definition \eqref{D:linear} of a linear hypergraph.\\
This completes the proof.\qe

\subsection{Maximum Frobenius Norm of the Seidel matrix of hypergraphs}

\begin{definition}\rm\cite[Definition 1.2]{SSS2025}\label{Complete}
A $k$-uniform hypergraph with $n$ vertices is said to be \emph{complete} if it contains all the hyperedges of size $k$. Such a hypergraph is denoted by $\mathcal{C}^k_n$. 
\end{definition}

The following lemma computes the complete Seidel spectrum of $\mathcal{C}^{k}_{n}$.

\begin{lemma}\rm\cite[Theorem 3]{AY2022}\label{HY1}
Let $\mathcal{C}^{k}_{n}$ be the $k$-uniform complete hypergraph with $n>2$ vertices. Then the Seidel spectrum of $\mathcal{C}^{k}_{n}$ is \begin{center}
$\operatorname{Spec}(\mathcal{S}(\mathcal{C}^{k}_{n}))=\small{\Bigg\{\overbrace{2\binom{n-2}{k-2}-1,\ldots,2\binom{n-2}{k-2}-1}^{(n-1)},(n-1)\Big(1-2\binom{n-2}{k-2}\Big)\Bigg\}}$. \end{center}
\end{lemma}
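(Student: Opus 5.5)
In $\mathcal{C}^k_n$ every pair of distinct vertices $i,j$ lies in exactly the hyperedges obtained by adding $k-2$ further vertices from the remaining $n-2$. So every co-degree is the same, $c_{ij}=\binom{n-2}{k-2}=:c$. Hence the Seidel matrix has the closed form
\[
\mathcal{S}(\mathcal{C}^k_n)=(1-2c)(J_n-I_n)=(1-2c)J_n-(1-2c)I_n .
\]
The plan is to read the spectrum off this expression, using only the spectrum of $J_n$.

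The key steps, in order, are as follows.

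First, recall that $J_n$ has eigenvalue $n$ on the all-ones vector $\mathbf{1}$. It has eigenvalue $0$ with multiplicity $n-1$ on the orthogonal complement $\mathbf{1}^\perp$, since $J_n x=(\mathbf{1}^T x)\mathbf{1}=0$ for $x\perp\mathbf{1}$.

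Second, since $\mathcal{S}$ is a polynomial in $J_n$, the two matrices share these eigenvectors. On $\mathbf{1}$ the eigenvalue is $(1-2c)n-(1-2c)=(n-1)(1-2c)$. On $\mathbf{1}^\perp$ it is $-(1-2c)=2c-1$, with multiplicity $n-1$.

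Third, substitute $c=\binom{n-2}{k-2}$. This gives exactly the multiset stated in the lemma. The hypothesis $n>2$ (with $n\ge k$) only ensures that $\mathcal{C}^k_n$ is nontrivial and the binomial coefficient is meaningful. As a sanity check, the trace is $(n-1)(2c-1)+(n-1)(1-2c)=0$, consistent with the zero diagonal.

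There is no real obstacle here. The only point needing care is the co-degree count: choosing the other $k-2$ vertices of a hyperedge containing $\{i,j\}$ from the remaining $n-2$ vertices gives $\binom{n-2}{k-2}$, uniformly over all pairs. Everything after that is the standard spectrum of $aJ_n+bI_n$.
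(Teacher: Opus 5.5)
Your proof is correct and complete. Since every pair of vertices has co-degree $c=\binom{n-2}{k-2}$, you get $\mathcal{S}(\mathcal{C}^k_n)=(1-2c)(J_n-I_n)$, and the splitting $\mathbb{R}^n=\operatorname{span}\{\mathbf{1}\}\oplus\mathbf{1}^\perp$ accounts for the whole spectrum: $(n-1)(1-2c)$ once and $2c-1$ with multiplicity $n-1$. The paper gives no proof of this lemma and simply cites \cite[Theorem 3]{AY2022}, so your argument is the standard self-contained verification, resting on the same uniform co-degree count the paper uses in the equality case of Theorem~\ref{lem:frobmax}.
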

The following theorem establishes that, among all 
$k$-uniform hypergraphs of fixed order $n$, the Frobenius norm of the Seidel matrix is maximized by the $k$-uniform complete hypergraph.

\begin{theorem}\label{lem:frobmax}
Let $\mathcal{H}$ be a $k$-uniform hypergraph with $n>2$ vertices. Then
\[
\|\mathcal{S}(\mathcal{H})\|_F
\leq\Bigg(n(n-1)\left(2\binom{n-2}{k-2}-1\right)^2\Bigg)^{\frac{1}{2}},
\]
with equality if and only if $\mathcal{H} = \mathcal{C}_n^k$.
\end{theorem}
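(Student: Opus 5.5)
The plan is to work entrywise with the formula $\|\mathcal{S}(\mathcal{H})\|_F^2=\sum_{i\neq j}(1-2c_{ij})^2$ from Lemma \ref{lem:Frob}. We then bound each off-diagonal entry separately. The key observation is that for a simple $k$-uniform hypergraph on $n$ vertices, every hyperedge containing a fixed pair $\{i,j\}$ has the form $\{i,j\}\cup T$ with $T\in\binom{V\setminus\{i,j\}}{k-2}$. Hence
\[
0\le c_{ij}\le \binom{n-2}{k-2}=:N .
\]
Equality $c_{ij}=N$ holds exactly when every $k$-subset containing $i,j$ is a hyperedge.

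Next we bound $(1-2c_{ij})^2=(2c_{ij}-1)^2$ on the integer range $0\le c_{ij}\le N$. The function $x\mapsto(2x-1)^2$ is convex, so on the interval $[0,N]$ its maximum is attained at an endpoint, and the candidates are $1$ (at $x=0$) and $(2N-1)^2$ (at $x=N$).

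There is one subtle step. If $N\ge 2$, then $(2N-1)^2\ge 9>1$, so the maximum is $(2N-1)^2$ and it is attained only at $c_{ij}=N$. If $N=1$, which happens when $k=2$ or $k=n$, then $c_{ij}\in\{0,1\}$ and both values give $1$. In that case every hypergraph attains the bound, and the "only if" part of the equality statement fails, for instance for the empty hypergraph. So the statement needs the implicit assumption $\binom{n-2}{k-2}\ge 2$, i.e. $3\le k\le n-1$. I would flag this, prove the result under that assumption, and note that when $N=1$ the bound still holds but equality is not unique.

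Summing the entrywise bound over the $n(n-1)$ ordered pairs gives $\|\mathcal{S}(\mathcal{H})\|_F^2\le n(n-1)(2N-1)^2$. Taking square roots yields the stated inequality.

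For the equality case, when $N\ge2$ equality forces $c_{ij}=N$ for every pair. Then every $k$-subset is a hyperedge, since any $k$-set contains some pair (as $k\ge2$). Therefore $\mathcal{H}=\mathcal{C}^k_n$.

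For the converse, $\mathcal{C}^k_n$ has $c_{ij}=N$ for all pairs, so equality holds. As a consistency check, this matches Lemma \ref{HY1}: the eigenvalue squares sum to
\[
(n-1)(2N-1)^2+(n-1)^2(2N-1)^2=n(n-1)(2N-1)^2 .
\]
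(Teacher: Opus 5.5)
Your proof is correct and follows essentially the same route as the paper: bound each co-degree by $0\le c_{ij}\le\binom{n-2}{k-2}$, sum $(1-2c_{ij})^2$ over the $n(n-1)$ ordered pairs, split on whether $\binom{n-2}{k-2}=1$ or $\binom{n-2}{k-2}>1$, force $c_{ij}=\binom{n-2}{k-2}$ for every pair in the equality case, and check the converse (the paper does this via the spectrum in Lemma \ref{HY1}). Your caveat is justified: when $\binom{n-2}{k-2}=1$ (i.e.\ $k=2$ or $k=n$) every $k$-uniform hypergraph attains the bound, while the paper's Case (i) only verifies that $\mathcal{C}^k_n$ attains it, so the ``only if'' part genuinely requires $3\le k\le n-1$.
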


\proof Let $\rho_1(\mathcal{H}),\rho_2(\mathcal{H}),\dots,\rho_n(\mathcal{H})$ be eigenvalues of $\mathcal{S}(\mathcal{H})$. There are two cases.\\\\ 
\textbf{Case (i):} Let $\binom{n-2}{k-2}=1$. Then either $k-2=0$ or $n-2=k-2$.

For $k-2=0\implies k=2$, it is simply a graph case. Therefore, $\mathcal{H}=G$ and $\mathcal{S}(\mathcal{H})=S(G)$. Since every off-diagonal entry of S(G) is $\pm1$, we have
\[
\|S(G)\|_F
= \Big(\sum_{i=1}^n \sum_{j=1}^n (S(G))_{ij}^2\Big)^{\frac{1}{2}}
=\sqrt{n(n-1)},
\]
Thus, $\|S(G)\|_F$ is constant for all graphs of order $n$, which is independent of the structure of $G$. Hence, the equality for the Frobenius norm of the Seidel matrix is attained when the graph is complete ($\mathcal{C}^2_n=K_n$).

If $n=k$, then the only possible $k$-subset of $V(\mathcal{H})$ is $V(\mathcal{H})$ itself. Hence, the unique complete $k$-uniform hypergraph on $n=k$ vertices is $\mathcal{C}_k^k$ consisting of the single hyperedge $V(\mathcal{H})$. By Lemma \ref{HY1}, $\rho_i(\mathcal{C}^k_n)=1$ for all $i=1,\ldots,n-1$, $\rho_n(\mathcal{C}^k_n)=-(n-1)$.     

Hence, 
$$\|\mathcal{S}(\mathcal{C}^k_k)\|_F=\Big(\sum\limits_{i=1}^n \rho_i(\mathcal{C}^k_k)^2\Big)^{\frac{1}{2}}=\sqrt{n(n-1)}$$
\textbf{Case (ii):} Let $\binom{n-2}{k-2}>1$.

For each pair $i\ne j$, the co-degree $c_{ij}$ counts the number of $k$-hyperedges containing $\{i,j\}$. After fixing $i$ and $j$, the remaining $k-2$ vertices of such a hyperedge can be chosen from the other $n-2$ vertices in at most $\binom{n-2}{k-2}$ ways. Therefore,
$$0 \leq c_{ij} \leq \binom{n-2}{k-2},$$
implies, 
$$-1\leq-(1-2c_{ij})\leq-\Bigg(1-2\binom{n-2}{k-2}\Bigg),$$
and hence
\[
|1 - 2c_{ij}|\leq 2\binom{n-2}{k-2}-1.
\]
Since $\mathcal{S}(\mathcal{H})$ is symmetric with zero diagonal and $\|\mathcal{S}(\mathcal{H})\|_F=\Big(2\sum\limits_{i<j}(1-2c_{ij})^2\Big)^{\frac{1}{2}}$ we have

\begin{equation}\label{full}
\|\mathcal{S}(\mathcal{H})\|_F\leq
\bigg(n(n-1)\left(2\binom{n-2}{k-2}-1\right)^2\bigg)^{\frac{1}{2}}.
\end{equation}
Suppose that the equality in the inequality \eqref{full} holds. Then we have $|1-2c_{ij}| = 2\binom{n-2}{k-2}-1$ for all $i \neq j$, giving either $c_{ij} = \binom{n-2}{k-2}$ or $c_{ij} = 0$. Since $\binom{n-2}{k-2}>1$, the case $c_{ij} = 0$ implies $|1 - 0| = 1 < 2\binom{n-2}{k-2}-1$, which is a contradiction to our assumption. Hence equality gives 
$c_{ij}=\binom{n-2}{k-2}$ for all $i \neq j$, i.e., $\mathcal{H}=\mathcal{C}_n^k$.

Conversely let $\mathcal{H}=\mathcal{C}_n^k$. Since $\mathcal{S}(\mathcal{C}_n^k)$ is a real symmetric matrix with eigenvalues $\rho_i(\mathcal{C}^k_n)=2\binom{n-2}{k-2}-1$ for all $i=1,\ldots,n-1$, $\rho_n(\mathcal{C}^k_n)=(n-1)\Big(1-2\binom{n-2}{k-2}\Big)$, by Lemma \ref{HY1}. 

Then
\begin{align*}
\|\mathcal{S}(\mathcal{C}^k_n)\|_F
&=\left(\sum_{i=1}^n \rho_i(\mathcal{C}^k_n)^2\right)^{\frac{1}{2}},\\
&=\Bigg((n-1)\Big(2\binom{n-2}{k-2}-1\Big)^2+(n-1)^2\Big(2\binom{n-2}{k-2}-1\Big)^2\Bigg)^{\frac{1}{2}},\\
&=\Bigg(n(n-1)\left(2\binom{n-2}{k-2}-1\right)^2\Bigg)^{\frac{1}{2}}.
\end{align*}
This completes the proof.\qe

\subsection{Lower and upper bounds for the Seidel energy of hypergraphs}
For $1 \le p < \infty$, the $\ell_{p}$-norm of a vector $x=(x_1,\dots,x_n)\in\mathbb{K}^n$ with $\mathbb{K}\in\{\mathbb{R},\mathbb{C}\}$ is defined by
\[
\|x\|_{p}=\left(\sum_{i=1}^{n}|x_i|^{p}\right)^{\frac{1}{p}}.
\]

\begin{lemma}\rm\cite[Page 333]{H2013}\label{200}
Let $1\le p_1<p_2<\infty$ and $x\in\mathbb{K}^n$ with $\mathbb{K}\in\{\mathbb{R},\mathbb{C}\}$. Then
\[
\|x\|_{p_2}\ \le\ \|x\|_{p_1}\ \le\ n^{\frac1{p_1}-\frac1{p_2}}\ \|x\|_{p_2},
\]
where $\|\cdot\|_{p}$ is the $\ell_{p}$-norm.
\end{lemma}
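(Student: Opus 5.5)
The statement is the standard comparison of $\ell_p$-norms on $\mathbb{K}^n$, and I would prove the two inequalities separately. Fix $x\neq 0$; the case $x=0$ is trivial.

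\textbf{Left inequality.} Both sides are positively homogeneous of degree one, so I may normalize to $\|x\|_{p_1}=1$. Then $|x_i|\le 1$ for every $i$. Since $p_2>p_1$, this gives $|x_i|^{p_2}\le |x_i|^{p_1}$, and summing over $i$ yields
\[
\|x\|_{p_2}^{p_2}\le \|x\|_{p_1}^{p_1}=1.
\]
Hence $\|x\|_{p_2}\le 1=\|x\|_{p_1}$, and undoing the normalization gives $\|x\|_{p_2}\le\|x\|_{p_1}$ for all $x$.

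\textbf{Right inequality.} I would apply H\"older's inequality to the sum $\sum_i |x_i|^{p_1}\cdot 1$ with conjugate exponents $r=p_2/p_1>1$ and $r'=r/(r-1)$. This gives
\[
\sum_{i=1}^n |x_i|^{p_1}\le \Big(\sum_{i=1}^n |x_i|^{p_2}\Big)^{p_1/p_2}\, n^{1-p_1/p_2}.
\]
Taking the $1/p_1$-th power of both sides produces
\[
\|x\|_{p_1}\le n^{\frac1{p_1}-\frac1{p_2}}\|x\|_{p_2}.
\]
Alternatively, one can use Jensen's inequality for the convex function $t\mapsto t^{r}$ applied to the average of the numbers $|x_i|^{p_1}$.

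Neither step is a real obstacle. The only points needing care are choosing the exponent $r=p_2/p_1$ correctly in H\"older's inequality and tracking the power of $n$ when taking roots. For the intended application to the Seidel energy, I would take $x=(\rho_1(\mathcal{H}),\dots,\rho_n(\mathcal{H}))$, $p_1=1$ and $p_2=2$. This gives
\[
\|\mathcal{S}(\mathcal{H})\|_F\le\mathcal{E}(\mathcal{S}(\mathcal{H}))\le\sqrt{n}\,\|\mathcal{S}(\mathcal{H})\|_F,
\]
which combines with Theorems \ref{lem:lin} and \ref{lem:frobmax} to bound the Seidel energy.
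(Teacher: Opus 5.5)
Your proof is correct. Normalizing to $\|x\|_{p_1}=1$ forces $|x_i|\le 1$, so $|x_i|^{p_2}\le |x_i|^{p_1}$ termwise, which gives the left inequality. H\"older's inequality with exponents $r=p_2/p_1$ and $r'=r/(r-1)$ (or Jensen for $t\mapsto t^{r}$) gives the right inequality; after taking $p_1$-th roots the factor is exactly $n^{1/p_1-1/p_2}$.

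The paper does not prove this lemma; it quotes it from Horn--Johnson. So there is no competing argument, and your write-up simply supplies the standard textbook proof, which makes the section self-contained.

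One thing your explicit argument gives that the citation does not is the equality cases. Equality on the left holds iff $x$ has at most one nonzero coordinate. Equality on the right holds iff all $|x_i|$ are equal. For $p_1=1$, $p_2=2$, the first case shows that $\mathcal{E}(\mathcal{S}(\mathcal{H}))=\|\mathcal{S}(\mathcal{H})\|_F$ would force $\mathcal{S}(\mathcal{H})$ to have at most one nonzero eigenvalue. That is impossible for a trace-zero symmetric matrix whose off-diagonal entries $1-2c_{ij}$ are odd, hence nonzero, when $n\ge 2$. So the later lower bound $\mathcal{E}(\mathcal{S}(\mathcal{H}))\ge\sqrt{n(n-1)}$ is in fact always strict.

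In your application, taking $x=(\rho_1,\ldots,\rho_n)$ instead of the paper's $(|\rho_1|,\ldots,|\rho_n|)$ changes nothing, since $\ell_p$-norms depend only on the moduli of the entries.
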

The following lemma gives a useful comparison between the Seidel energy of a hypergraph and the Frobenius norm of the Seidel matrix.

\begin{lemma}\label{lem:sand}
Let $\mathcal{H}$ be a hypergraph on $n$ vertices. Then
\[
\|\mathcal{S}(\mathcal{H})\|_F
\le
\mathcal{E}(\mathcal{S}(\mathcal{H}))
\le
\sqrt{n}\,\|\mathcal{S}(\mathcal{H})\|_F.
\]
\end{lemma}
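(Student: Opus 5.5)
The plan is to apply Lemma \ref{200} to the vector of Seidel eigenvalues, with $p_1=1$ and $p_2=2$. First I set $x=(\rho_1(\mathcal{H}),\dots,\rho_n(\mathcal{H}))\in\mathbb{R}^n$. By definition, $\|x\|_1=\sum_i|\rho_i(\mathcal{H})|=\mathcal{E}(\mathcal{S}(\mathcal{H}))$.

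The key step is to identify $\|x\|_2$ with the Frobenius norm. Since $\mathcal{S}(\mathcal{H})$ is real symmetric, it is orthogonally diagonalizable, $\mathcal{S}(\mathcal{H})=QDQ^{T}$. Hence
\[
\|\mathcal{S}(\mathcal{H})\|_F^2=\operatorname{tr}\big(\mathcal{S}(\mathcal{H})^T\mathcal{S}(\mathcal{H})\big)=\operatorname{tr}\big(\mathcal{S}(\mathcal{H})^2\big)=\sum_{i=1}^n\rho_i(\mathcal{H})^2,
\]
so $\|x\|_2=\|\mathcal{S}(\mathcal{H})\|_F$. This is the same identity already used implicitly in the proof of Theorem \ref{lem:frobmax}.

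Substituting into Lemma \ref{200} with $p_1=1$, $p_2=2$, the exponent is $n^{1-\frac12}=\sqrt{n}$. This gives exactly
\[
\|\mathcal{S}(\mathcal{H})\|_F\le\mathcal{E}(\mathcal{S}(\mathcal{H}))\le\sqrt{n}\,\|\mathcal{S}(\mathcal{H})\|_F.
\]
No uniformity hypothesis is needed, consistent with the statement being for an arbitrary hypergraph.

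There is no real obstacle. The only point to state carefully is the trace identity linking the Frobenius norm to the eigenvalues, which relies on the symmetry of $\mathcal{S}(\mathcal{H})$. If desired, I can also note the elementary justifications: the lower bound follows from $(\sum|\rho_i|)^2\ge\sum\rho_i^2$, and the upper bound from Cauchy--Schwarz.
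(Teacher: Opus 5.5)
Your proof is correct and follows essentially the same route as the paper: you identify $\|\mathcal{S}(\mathcal{H})\|_F$ with the $\ell_2$-norm of the eigenvalue vector, using orthogonal diagonalization of the symmetric matrix $\mathcal{S}(\mathcal{H})$, and then apply Lemma~\ref{200} with $p_1=1$, $p_2=2$. The only cosmetic difference is that the paper works with $(|\rho_1|,\dots,|\rho_n|)$ and phrases the identity through singular values, whereas you use the trace identity $\|\mathcal{S}(\mathcal{H})\|_F^2=\operatorname{tr}(\mathcal{S}(\mathcal{H})^2)$, which changes nothing of substance.
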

\proof  Since $\mathcal{S}(\mathcal{H})$ is symmetric, it is orthogonally diagonalizable. Then there exists an orthogonal matrix $Q$ with $\mathcal{S}(\mathcal{H})=Q~\mathrm{diag}(\rho_1,\dots,\rho_n)~Q^\top$ and $\rho_i\in\mathbb{R}$. For a symmetric matrix, the singular values are $|\rho_1|, \dots, |\rho_n|$. Hence,
\[
\|\mathcal{S}(\mathcal{H})\|_F=\sqrt{\Big(\sum_{i=1}^n \rho_i^2\Big)}.
\]
Consider the vector $x=\big(|\rho_1|,\dots,|\rho_n|\big)\in\mathbb{R}^n_{\ge 0}$. Let $p=1,~p_2=2$ and apply the norm inequalities in Lemma \ref{200},
\[
\|x\|_2 \;\le\; \|x\|_1 \;\le\; \sqrt{n}\,\|x\|_2.
\]
Substituting $\|x\|_2=\sqrt{\sum\limits_{i=1}^n\rho_i^2}=\|\mathcal{S}(\mathcal{H})\|_F$ and $\|x\|_1=\sum\limits_{i=1}^n |\rho_i|=\mathcal{E}(\mathcal{S(H)})$
yields
\[
\|\mathcal{S}(\mathcal{H})\|_F \;\le\;\mathcal{E}(\mathcal{S(H)})  \;\le\; \sqrt{n}\,\|\mathcal{S}(\mathcal{H})\|_F,
\]
This completes the proof.\qe 

The following theorem gives an upper bound for the Seidel energy of a hypergraph.

\begin{theorem}\label{thm:seidel_upper}
Let $\mathcal{H}$ be a $k$-uniform hypergraph with $n>2$ vertices. Then 
\[
\mathcal{E}(\mathcal{S}(\mathcal{H})) 
\leq 
n\sqrt{n-1}\left(2\binom{n-2}{k-2}-1\right),
\]
\end{theorem}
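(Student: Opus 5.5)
The plan is to combine the two results just established: the sandwich inequality of Lemma \ref{lem:sand}, which controls the Seidel energy by the Frobenius norm, and the extremal Frobenius bound of Theorem \ref{lem:frobmax}. The right-hand inequality of Lemma \ref{lem:sand} gives
\[
\mathcal{E}(\mathcal{S}(\mathcal{H}))\le \sqrt{n}\,\|\mathcal{S}(\mathcal{H})\|_F .
\]
By Theorem \ref{lem:frobmax}, valid since $n>2$,
\[
\|\mathcal{S}(\mathcal{H})\|_F\le \sqrt{n(n-1)}\left(2\binom{n-2}{k-2}-1\right).
\]
Substituting the second bound into the first yields
\[
\mathcal{E}(\mathcal{S}(\mathcal{H}))\le \sqrt{n}\cdot\sqrt{n(n-1)}\left(2\binom{n-2}{k-2}-1\right)=n\sqrt{n-1}\left(2\binom{n-2}{k-2}-1\right),
\]
which is exactly the claimed bound.

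Before taking the square root in Theorem \ref{lem:frobmax}, one must check that $2\binom{n-2}{k-2}-1\ge 1>0$. This holds because $2\le k\le n$ forces $\binom{n-2}{k-2}\ge 1$. Hence $\sqrt{\left(2\binom{n-2}{k-2}-1\right)^2}=2\binom{n-2}{k-2}-1$ with no absolute value needed. In the degenerate case $\binom{n-2}{k-2}=1$ (that is, $k=2$ or $k=n$), the argument still applies verbatim, because the proof of Theorem \ref{lem:frobmax} showed $\|\mathcal{S}(\mathcal{H})\|_F=\sqrt{n(n-1)}$ there. The bound then reads $\mathcal{E}\le n\sqrt{n-1}$, which is Haemers' bound in the graph case.

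There is no real obstacle here; the only care needed is this positivity of the factor, together with the standing assumption $n>2$ inherited from Theorem \ref{lem:frobmax}. If an equality statement were wanted, I would argue as follows. Equality would require equality in both steps. Equality in Lemma \ref{lem:sand} means all $|\rho_i|$ are equal. By Theorem \ref{lem:frobmax}, equality in the Frobenius step (for $\binom{n-2}{k-2}>1$) forces $\mathcal{H}=\mathcal{C}_n^k$. But Lemma \ref{HY1} shows that the eigenvalues of $\mathcal{C}_n^k$ do not all have equal absolute value when $n>2$. So the bound is strict in that case. Since the theorem as stated only asserts the inequality, I would not pursue this further.
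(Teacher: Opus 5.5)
Your proposal is correct and matches the paper's proof: you combine the right-hand inequality $\mathcal{E}(\mathcal{S}(\mathcal{H}))\le\sqrt{n}\,\|\mathcal{S}(\mathcal{H})\|_F$ of Lemma~\ref{lem:sand} with the Frobenius bound of Theorem~\ref{lem:frobmax}, exactly as the paper does. Your extra remarks are also correct but are not in the paper. These are the check that $2\binom{n-2}{k-2}-1\ge 1$ (so no absolute value is needed when taking the square root), and the observation that the bound is strict whenever $\binom{n-2}{k-2}>1$, because the spectrum of $\mathcal{C}_n^k$ does not have constant modulus for $n>2$.
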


\proof By Theorem \ref{lem:frobmax}, 
\[
\|\mathcal{S}(\mathcal{H})\|_F 
\leq 
\sqrt{n(n-1)}\left(2\binom{n-2}{k-2}-1\right).
\]
Applying the upper bound in Lemma \ref{lem:sand},
\begin{align*}
\mathcal{E}(\mathcal{S}(\mathcal{H})) 
&\leq 
\sqrt{n}\,\|S(\mathcal{H})\|_F \\
&\leq 
\sqrt{n} \cdot \sqrt{n(n-1)}\left(2\binom{n-2}{k-2}-1\right)\\
&=n\sqrt{n-1}\left(2\binom{n-2}{k-2}-1\right).
\end{align*}
This completes the proof.\qe 

The following theorem gives a lower bound for the Seidel energy of a hypergraph.

\begin{theorem}\label{thm:min-proper}
Let $\mathcal{H}$ be a $k$-uniform hypergraph with $n$ vertices. Then the following holds.

\begin{enumerate}
\item[(i)] For every $\mathcal H$,
\[
\mathcal E(\mathcal S(\mathcal H))\ge \sqrt{n(n-1)}.
\]

\item[(ii)] If for some $\mathcal H$ equality holds in (i), then $\mathcal H$ is linear.

\item[(iii)] For every linear $k$-uniform hypergraph $\mathcal H$ on $n$ vertices,
\[
\sqrt{n(n-1)}\le \mathcal E(\mathcal S(\mathcal H))\le n\sqrt{n-1}.
\]
\end{enumerate}
\end{theorem}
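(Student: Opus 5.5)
The plan is to chain Lemma \ref{lem:sand} with the two Frobenius norm results, Theorem \ref{lem:lin} and Lemma \ref{lem:Frob}.

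\textbf{Part (i).} By the lower bound in Lemma \ref{lem:sand}, $\mathcal E(\mathcal S(\mathcal H))\ge \|\mathcal S(\mathcal H)\|_F$. By Theorem \ref{lem:lin}, $\|\mathcal S(\mathcal H)\|_F\ge\sqrt{n(n-1)}$. Combining the two inequalities gives (i).

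\textbf{Part (ii).} Suppose equality $\mathcal E(\mathcal S(\mathcal H))=\sqrt{n(n-1)}$ holds. Then both inequalities in the chain
\[
\sqrt{n(n-1)}\le\|\mathcal S(\mathcal H)\|_F\le \mathcal E(\mathcal S(\mathcal H))=\sqrt{n(n-1)}
\]
are equalities. In particular $\|\mathcal S(\mathcal H)\|_F=\sqrt{n(n-1)}$, and the equality case of Theorem \ref{lem:lin} forces $\mathcal H$ to be linear. I will phrase (ii) only as a necessary condition. The converse fails: equality in $\|x\|_2\le\|x\|_1$ requires $\mathcal S(\mathcal H)$ to have at most one nonzero eigenvalue. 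A symmetric, zero-trace, nonzero matrix cannot have this, so for $n\ge2$ equality in (i) never actually occurs. I will mention this remark but not rely on it.

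\textbf{Part (iii).} Let $\mathcal H$ be linear. The lower bound is (i). For the upper bound, Theorem \ref{lem:lin} gives $\|\mathcal S(\mathcal H)\|_F=\sqrt{n(n-1)}$ exactly, because all $c_{ij}\in\{0,1\}$. Equivalently, by Lemma \ref{lem:Frob}, every off-diagonal entry is $\pm1$. The upper bound of Lemma \ref{lem:sand} then yields
\[
\mathcal E(\mathcal S(\mathcal H))\le\sqrt n\cdot\sqrt{n(n-1)}=n\sqrt{n-1}.
\]

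No step is a real obstacle. The only point needing care is in (ii): stating correctly that equality in (i) forces equality in Theorem \ref{lem:lin}, which then gives linearity.
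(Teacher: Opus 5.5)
Your proposal is correct and follows the paper's proof essentially step for step. You chain Lemma \ref{lem:sand} with Theorem \ref{lem:lin} for (i), squeeze $\|\mathcal S(\mathcal H)\|_F$ between $\sqrt{n(n-1)}$ and $\mathcal E(\mathcal S(\mathcal H))$ for (ii), and use the exact value $\|\mathcal S(\mathcal H)\|_F=\sqrt{n(n-1)}$ for linear $\mathcal H$ in (iii).

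Your side remark is also correct and goes beyond the paper. The off-diagonal entries $1-2c_{ij}$ are odd, hence nonzero, and $\operatorname{tr}\mathcal S(\mathcal H)=0$, so $\mathcal S(\mathcal H)$ cannot have exactly one nonzero eigenvalue. Therefore equality in (i) never occurs for $n\ge 2$, and (ii) holds only vacuously.
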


\proof By Theorem \ref{lem:lin},
\[
\|\mathcal S(\mathcal H)\|_F\ge \sqrt{n(n-1)},
\]
with equality if and only if $\mathcal H$ is linear.

By Lemma \ref{lem:sand},
\[
\|\mathcal S(\mathcal H)\|_F
\le
\mathcal E(\mathcal S(\mathcal H))
\le
\sqrt n\,\|\mathcal S(\mathcal H)\|_F.
\]
From the left inequality of the above,
\[
\mathcal E(\mathcal S(\mathcal H))\ge \sqrt{n(n-1)}
\]
for every $\mathcal H$, proving (i).

If equality holds in (i), then
\[
\sqrt{n(n-1)}=\mathcal E(\mathcal S(\mathcal H))
\ge
\|\mathcal S(\mathcal H)\|_F
\ge
\sqrt{n(n-1)}.
\]
Hence
\[
\|\mathcal S(\mathcal H)\|_F=\sqrt{n(n-1)}.
\]
By Theorem \ref{lem:lin}, $\mathcal H$ is linear. This proves (ii).

Now let $\mathcal H$ be a connected linear $k$-uniform hypergraph on $n$ vertices. Then Theorem \ref{lem:lin},
\[
\|\mathcal S(\mathcal H)\|_F=\sqrt{n(n-1)}.
\]
Applying Lemma \ref{lem:sand}, we obtain
\[
\sqrt{n(n-1)}
=
\|\mathcal S(\mathcal H)\|_F
\le
\mathcal E(\mathcal S(\mathcal H))
\le
\sqrt n\,\|\mathcal S(\mathcal H)\|_F
=
n\sqrt{n-1}.
\]
This proves (iii).\\
This completes the proof of all the parts of the theorem.\qe 

\begin{remark}
The upper bound in part (iii) of Theorem \ref{thm:min-proper} improves the bound in Theorem \ref{thm:seidel_upper} for linear $k$-uniform hypergraphs. 
\end{remark}

\section{Haemers' Conjecture fails for the Seidel matrix of hypergraphs}

In 2012, Haemers introduced the concept of the Seidel energy of a graph and proposed the following Conjecture: 

\begin{conjecture}\rm\cite[Page 659]{WH2012}\label{CCcc}
The complete graph $K_n$ minimizes the Seidel energy among all graphs with fixed order $n$, i.e., for every graph $G$ of order $n$, $\mathcal{E}(S(G))\geq\mathcal{E}(S(K_n))=2n-2$.
\end{conjecture}
The above Conjecture \ref{CCcc} was open for some time and was investigated in the following articles:

\begin{enumerate}
\item In 2012, Haemers \cite{WH2012} first proved this Conjecture for $n\leq10$.

\item In 2016, Greaves, Koolen, Munemasa, and Szöllősi \cite{GG2016} proved for $ n\leq 12$.

\item In 2016, Oboudi \cite{MRO2016} proved {\em Haemers’ Conjecture} for every $r$-regular graph $G$ of order $n$ such that $r\neq\frac{n-1}{2}$ and $G$ has no eigenvalue in $(-1,0)$.

\item In 2017, Ghorbani \cite{EG2017} proved {\em Haemers’ Conjecture} for the graphs $G$ of order $n$ such that $n-1\leq|det(S(G))|$, where $det(S(G))$ is the determinant of the Seidel matrix $S(G)$.

\item In 2020,  Akbari, Einollahzadeh, Karkhaneei, and Nematollahi \cite{S2020} gave the proof of a {\em Haemers’ Conjecture} on the Seidel energy of graphs. Later, in 2024, Einollahzadeh and Nematollahi \cite{E2024} gave a short proof of {\em Haemers' Conjecture} on the Seidel energy of graphs.
\end{enumerate}
Now it is natural to ask the following question:

\begin{question}\label{Queee}
Is Haemers’ Conjecture valid for all $k$-uniform hypergraphs? Specifically, does the inequality 
\begin{center}
$\mathcal{E}(\mathcal{S}(\mathcal{H}))\geq\mathcal{E}(\mathcal{S}(\mathcal{C}^k_{n}))$
\end{center}
hold for every $k$-uniform hypergraph $\mathcal{H}$ with $n$ vertices? 
\end{question}
\textbf{Answer:} The answer is negative.

To prove this, we first compute the Seidel energy of the complete $k$-uniform hypergraph and then compare it with that of a suitably structured k-uniform hypergraph.

The following theorem computes the Seidel energy of $k$-uniform complete hypergraphs.
\begin{theorem}\label{HY2}
Let $\mathcal{C}^{k}_{n}$ be the $k$-uniform complete hypergraph with $n$ vertices. Then the Seidel energy of $\mathcal{C}^{k}_{n}$ is \begin{center}
$\mathcal{E}(\mathcal{S}(\mathcal{C}^k_n))=2(n-1)\Big(2\binom{n-2}{k-2}-1\Big)$.    
\end{center}
\end{theorem}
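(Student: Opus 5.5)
The plan is to read the energy off the spectrum already computed in Lemma \ref{HY1}. Write $m=\binom{n-2}{k-2}$. For $n>2$, Lemma \ref{HY1} gives the Seidel eigenvalue $2m-1$ with multiplicity $n-1$ and the simple eigenvalue $(n-1)(1-2m)$. Since $k\ge 2$ and $n\ge k$, we have $m\ge 1$, hence $2m-1\ge 1>0$. So the first eigenvalue is positive and the second is nonpositive, with absolute value $(n-1)(2m-1)$.

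Summing absolute values then gives
\[
\mathcal{E}(\mathcal{S}(\mathcal{C}^k_n))=(n-1)(2m-1)+(n-1)(2m-1)=2(n-1)\Big(2\binom{n-2}{k-2}-1\Big),
\]
which is the claimed formula. As a sanity check, the eigenvalues sum to zero, consistent with $\operatorname{tr}\mathcal{S}=0$.

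If one wants to avoid quoting Lemma \ref{HY1}, the spectrum can be derived directly. In $\mathcal{C}^k_n$ every co-degree equals $m$, so
\[
\mathcal{S}(\mathcal{C}^k_n)=(1-2m)(J_n-I_n).
\]
The matrix $J_n-I_n$ has eigenvalue $n-1$ on $\mathbf{1}$ and eigenvalue $-1$ on $\mathbf{1}^\perp$ (multiplicity $n-1$), and scaling by $1-2m$ gives the spectrum above. This also covers the small cases $n\le 2$: for $n=2$, $k=2$ the formula gives $2$, which matches the spectrum $\{1,-1\}$.

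The only point needing care is the sign of $2m-1$, which rests on $m\ge1$. That holds because $n\ge k$ forces $\binom{n-2}{k-2}\ge1$. Everything else is direct computation.
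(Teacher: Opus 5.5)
Your proposal is correct and matches the paper's proof: set $a=2\binom{n-2}{k-2}-1\ge 1$, read off the spectrum from Lemma~\ref{HY1}, and sum $(n-1)a+(n-1)a$. Your extra observation that $\mathcal{S}(\mathcal{C}^k_n)=(1-2m)(J_n-I_n)$ derives the spectrum directly and also covers $n=2$, which the cited lemma (stated only for $n>2$) does not.
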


\proof By Lemma \ref{HY1},
\[
\operatorname{Spec}(S(C_n^k))=
\Bigg\{
\underbrace{2\binom{n-2}{k-2}-1,\dots,2\binom{n-2}{k-2}-1}_{n-1\text{ times}},
\ (n-1)\left(1-2\binom{n-2}{k-2}\right)
\Bigg\}.
\]
Let $a=2\binom{n-2}{k-2}-1$. Then $a\ge 1$, and the spectrum consists of $a$ with multiplicity $n-1$ and $-(n-1)a$ once. Therefore
\[
\mathcal{E}(S(C_n^k))=(n-1)|a|+|-(n-1)a|
=2(n-1)a
=2(n-1)\left(2\binom{n-2}{k-2}-1\right).
\]
This completes the proof.\qe

\begin{definition}
Let $X=\{x_1,\dots,x_n\}$ be a finite set and let $U\subseteq X$. The characteristic vector of \(U\) with respect to \(X\) is the vector \(\chi_U\in\mathbb{R}^n\) defined by
$$(\chi_U)_i=
\begin{cases}
1,& x_i\in U,\\
0,& x_i\notin U.
\end{cases}$$
\end{definition}

\begin{definition}\label{d1}\rm\cite[Definition 2.3]{B2012}
Let $M$ be a real symmetric matrix whose rows and columns are indexed by $X=\{1, 2,\ldots,n\}$. Let $\pi=\{X_{1},X_{2},\ldots,X_{t}\}$ be a partition of $X$. The characteristic matrix $C$ is the $n\times m$ matrix whose $jth$ column is the characteristic vector of $X_j$ where $j=1, 2,\ldots,m$. Let $M$ be partitioned according to $\pi$ as
$$ M_{\pi} = \begin{bmatrix} 
M_{11} & M_{12} & \dots & M_{1m} \\
M_{21} & M_{22} & \ldots & M_{2m}   \\
\vdots & \vdots & \ddots & \vdots \\
M_{m1} & M_{m2} & \ldots & M_{mm} \\
\end{bmatrix},$$
where $M_{ij}$ denotes the submatrix (block) of $M$ formed by rows in $X_i$ and the columns in $X_j$. If $q_{ij}$ denotes the row sum of $M_{ij}$, then the matrix $Q^M=[(q^M_{ij})]_{m \times m}$ is called the quotient matrix of $M$. If the row sum of each block $M_{ij}$ is a constant, then the partition $\pi$ is called an equitable partition.
\end{definition}

\begin{lemma}\rm\cite[Lemma 2.3.1]{B2012}\label{u1}
Let $Q^M$ be the quotient matrix of any square matrix $M$ corresponding to an equitable partition. Then the spectrum of $M$ contains the spectrum of $Q^M$.
\end{lemma}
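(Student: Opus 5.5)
The statement to prove is Lemma \ref{u1}: if $\pi=\{X_1,\dots,X_m\}$ is an equitable partition of the index set of $M$, then every eigenvalue of the quotient matrix $Q^M$ is an eigenvalue of $M$. The plan is to lift eigenvectors of $Q^M$ to eigenvectors of $M$ through the characteristic matrix $C$ of $\pi$.

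First, we translate the equitable condition into a matrix identity. Let $C$ be the $n\times m$ characteristic matrix, whose $j$th column is $\chi_{X_j}$. Fix a row index $r\in X_i$ and a column $j$. Then the $(r,j)$ entry of $MC$ is $\sum_{s\in X_j} M_{rs}$, which is the row sum of the block $M_{ij}$ in row $r$. Equitability says this equals the constant $q^M_{ij}$ for every $r\in X_i$. The $(r,j)$ entry of $CQ^M$ is $\sum_{l} C_{rl}q^M_{lj}=q^M_{ij}$, because row $r$ of $C$ has a single $1$, in column $i$. Hence
\[
MC=CQ^M .
\]

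Next, let $\lambda$ be an eigenvalue of $Q^M$ with eigenvector $y\neq 0$, possibly complex. Then
\[
M(Cy)=CQ^M y=\lambda\, Cy .
\]
The columns of $C$ are characteristic vectors of nonempty, pairwise disjoint sets, so they are linearly independent and $C$ has full column rank $m$. Therefore $Cy\neq 0$, and $\lambda$ is an eigenvalue of $M$.

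The only point needing care is multiplicities, if "contains the spectrum" is read as a multiset inclusion. The lifting $y\mapsto Cy$ is injective, so for each $\lambda$ it maps the generalized eigenspace of $Q^M$ at $\lambda$ into the generalized eigenspace of $M$ at $\lambda$. This follows because $(M-\lambda I)^t C=C(Q^M-\lambda I)^t$ for all $t\ge 1$. Thus algebraic multiplicities are preserved as well.

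Equivalently, $\operatorname{col}(C)$ is $M$-invariant, and $Q^M$ is the matrix of $M$ restricted to this subspace in the basis given by the columns of $C$. So the characteristic polynomial of $Q^M$ divides that of $M$, which gives the full multiset statement. I do not expect any real obstacle; the step requiring the most care is the entrywise verification of $MC=CQ^M$, since that is exactly where equitability is used.
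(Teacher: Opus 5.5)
Your proof is correct and is essentially the standard argument behind the cited result; the paper gives no proof of its own and only refers to Brouwer--Haemers. Equitability gives $MC=CQ^M$, and $y\mapsto Cy$ lifts eigenvectors of $Q^M$ to eigenvectors of $M$ because $C$ has full column rank. Your extra observation is also correct: $\operatorname{col}(C)$ is $M$-invariant, so the characteristic polynomial of $Q^M$ divides that of $M$. This yields the multiset version of the inclusion, which is what the paper implicitly needs when it later uses the lemma to count repeated quotient eigenvalues in Lemma~\ref{LHY3}.
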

In spectral hypergraph theory, eigenvalues associated with various matrix representations of hypergraphs, such as adjacency, Laplacian, signless Laplacian, and Seidel-type matrices, play a fundamental role in encoding structural, combinatorial, and extremal properties of hypergraphs \cite{RB2025, LJK2023, S2022, S2025, LX2024}. However, in contrast to the graph case, there is still no general or unified method to determine the eigenvalues of hypergraphs. This difficulty primarily arises from the multi-vertex nature of hyperedges, which prevents the direct extension of linear and pairwise techniques from classical spectral graph theory.

A significant step toward a systematic spectral analysis of hypergraphs was made in 2022 \cite{S2022}, and later in 2025 \cite{S2025}, under the assumption that the hypergraph admits an equitable vertex partition with respect to its Laplacian matrix. Using Definition \ref{d1}, the notion of an equitable vertex partition for hypergraphs with respect to the Laplacian matrix was introduced in (see \cite[Theorem 4.5]{S2022}). It was demonstrated that such partitions give Laplacian eigenvalues through a suitably defined Laplacian quotient matrix \cite{S2020, S2025}. This framework provided one of the first effective mechanisms for extracting exact Laplacian spectral information.

Despite of this progress, the existing theory is largely restricted to Laplacian-based matrices and does not naturally extend to other important spectral constructions. In particular, the Seidel matrix, which has been highly effective in graph theory for studying switching equivalence, extremal spectra, rigidity, and energy-related parameters, lacks an analogous equitable vertex partition theory in the hypergraph setting.

In the following, we extend the concept of equitable vertex partition to hypergraphs using the Seidel matrix. We introduce a notion of Seidel-equitable vertex partition for hypergraphs and show how it leads to a reduced Seidel-quotient matrix whose eigenvalues are contained in the spectrum of the hypergraph Seidel matrix. This extension provides a new structural tool in spectral hypergraph theory and enables the systematic study of Seidel eigenvalues, Seidel energy, and related extremal problems for hypergraphs.

\begin{definition}
Let $\mathcal{H}=(V,E)$ be a hypergraph and $V_{1},V_{2},\ldots,V_{t}$ be a partition of the vertex set $V$. Then the set $\{V_{1},V_{2},\ldots,V_{t}\}$ is said to form a {\em Seidel-equitable vertex partition} if for each $r,s\in \{1,2,\ldots, t\}$ and for any $i\in V_{r}$, 
\begin{center}
$q^{\mathcal{S}}_{rs}=\sum\limits_{j;j\in V{s}}\mathcal{S}(\mathcal{H})_{ij}$,
\end{center} where $q_{rs}$ are constants depending on $r$ and $s$. 
\end{definition}

\begin{definition}
Let $\{V_1,\dots,V_t\}$ be a Seidel-equitable vertex partition of $V(\mathcal{H})$. The matrix $Q_{\mathcal{H}}^{\mathcal{S}}=[q_{rs}^{\mathcal{S}}]_{t\times t}$ is called the Seidel quotient matrix of $\mathcal{H}$.
\end{definition}
The following lemma is a consequence of Lemma \ref{u1}.

\begin{lemma}\label{P1} 
Let $\mathcal{H}=(V,~E)$ be a hypergraph and $Q^{\mathcal{S}}(\mathcal{H})$ be the Seidel quotient matrix corresponding to a Seidel equitable vertex partition of $V$. Then the Seidel spectrum of $\mathcal{H}$ contains the spectrum of $Q^{\mathcal{S}}(\mathcal{H})$.
\end{lemma}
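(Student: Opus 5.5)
The plan is to reduce Lemma \ref{P1} directly to Lemma \ref{u1}, applied to the real symmetric matrix $M=\mathcal{S}(\mathcal{H})$ with the vertex partition $\pi=\{V_1,\dots,V_t\}$. Three things need to be checked: the matrix is of the required type, the partition is equitable in the sense of Definition \ref{d1}, and the quotient matrices coincide. For the sake of completeness, I will also recall the standard argument behind Lemma \ref{u1}.

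\textbf{Step 1: Setup.} Index the rows and columns of $\mathcal{S}(\mathcal{H})=J_n-I_n-2\mathcal{A}(\mathcal{H})$ by $V=\{1,\dots,n\}$. Since $\mathcal{A}(\mathcal{H})$ is symmetric, $\mathcal{S}(\mathcal{H})$ is a real symmetric matrix. Partition it into blocks $M_{rs}$, where the rows of $M_{rs}$ are indexed by $V_r$ and the columns by $V_s$.

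\textbf{Step 2: Equitability.} By the definition of a Seidel-equitable vertex partition, for every $i\in V_r$ the sum $\sum_{j\in V_s}\mathcal{S}(\mathcal{H})_{ij}$ equals the constant $q^{\mathcal{S}}_{rs}$. This sum is exactly the $i$-th row sum of the block $M_{rs}$. Hence every block has constant row sum, so $\pi$ is an equitable partition in the sense of Definition \ref{d1}. Moreover, the quotient matrix $Q^M$ of that definition equals $[q^{\mathcal{S}}_{rs}]=Q^{\mathcal{S}}(\mathcal{H})$.

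\textbf{Step 3: Conclusion.} Lemma \ref{u1} then gives $\operatorname{Spec}(Q^{\mathcal{S}}(\mathcal{H}))\subseteq\operatorname{Spec}(\mathcal{S}(\mathcal{H}))$.

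To make this self-contained, one can verify it directly. Let $C$ be the $n\times t$ characteristic matrix whose columns are $\chi_{V_1},\dots,\chi_{V_t}$. The row-sum condition says precisely that
\[
\mathcal{S}(\mathcal{H})\,C=C\,Q^{\mathcal{S}}(\mathcal{H}).
\]
Now suppose $Q^{\mathcal{S}}(\mathcal{H})x=\lambda x$ with $x\neq 0$. Then $\mathcal{S}(\mathcal{H})(Cx)=\lambda\,Cx$. The columns of $C$ have disjoint nonempty supports, so $C$ has full column rank, and therefore $Cx\neq0$. Thus $\lambda$ is an eigenvalue of $\mathcal{S}(\mathcal{H})$.

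\textbf{Multiplicities.} If "contains the spectrum" is meant to include multiplicities, the argument needs a little more. Since $C$ is injective, $C$ maps the generalized eigenspaces of $Q^{\mathcal{S}}(\mathcal{H})$ injectively into those of $\mathcal{S}(\mathcal{H})$. Because $\mathcal{S}(\mathcal{H})$ is symmetric, its generalized eigenspaces are ordinary eigenspaces, so the algebraic multiplicity of $\lambda$ for $Q^{\mathcal{S}}(\mathcal{H})$ is at most that for $\mathcal{S}(\mathcal{H})$.

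There is no real obstacle here. The only point requiring care is the identification of the Seidel row-sum condition with the block row-sum condition of Definition \ref{d1}, together with the injectivity of $C$.
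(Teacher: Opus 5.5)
Your proposal is correct and follows the same approach as the paper, which simply states Lemma~\ref{P1} as a direct consequence of Lemma~\ref{u1}. Your reduction identifies the Seidel row-sum condition with the block row-sum condition of Definition~\ref{d1}, and this is exactly that route. Your self-contained verification via $\mathcal{S}(\mathcal{H})C=CQ^{\mathcal{S}}(\mathcal{H})$ and the injectivity of $C$ is sound. The multiplicity argument is a worthwhile addition, since the paper implicitly relies on multiset containment when it counts the repeated roots of the quotient polynomial in Lemma~\ref{LHY3}.
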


\begin{definition}\rm\cite[Page 27]{D2023}
Let $\mathcal{H}=\big(V(\mathcal{H}), E(\mathcal{H})\big)$ be a $k$-uniform hypergraph, where $V(\mathcal{H})=\{v_1,v_2,v_3,\ldots,v_{3k-4},v_{3k-3}\}$, $E(\mathcal{H})=\{e_1,e_2,e_3\}$ and $e_1=\{v_{1},v_{2},\ldots,v_{k}\}$, $e_2=\{v_{k},v_{(k+1)},\ldots,v_{(2k-1)}\}$ and $e_3=\{v_{(2k-1)},v_{2k},\ldots,v_{(3k-3)},v_1\}$. Then $\mathcal{H}$ is called the $k$-uniform hypertriangle $\mathcal{T}^k_3$.
\end{definition}
The following lemma computes the Seidel energy of a $k$-uniform hypertriangle.

\begin{lemma}\label{LHY3}
Let $\mathcal{T}^{k}_{3}$ be the $k$-uniform hypertriangle. Then the Seidel spectrum of $\mathcal{T}^{k}_{3}$ is 
\[
\operatorname{Spec}(\mathcal{S}(\mathcal{T}^k_3))=\bigg\{\overbrace{1,1,\ldots,1,1}^{(3k-9)},~3-k\pm\sqrt{k^2-4},~3-k\pm\sqrt{k^2-4},~\frac{k-3\pm\sqrt{k^2+6k-7}}{2}\bigg\}. 
\]
\end{lemma}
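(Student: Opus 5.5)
The plan is to exploit the structure of $\mathcal{T}^k_3$. It is a linear $3$-graph-like configuration on $n=3k-3$ vertices, so every co-degree is $0$ or $1$ and every off-diagonal entry of $\mathcal{S}(\mathcal{T}^k_3)$ is $\pm1$. The entry is $-1$ exactly when the two vertices lie in a common hyperedge.

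\textbf{Partition.} Split $V$ into six classes:
\begin{itemize}
\item the three ``corner'' vertices $C_1=\{v_1\}$, $C_2=\{v_k\}$, $C_3=\{v_{2k-1}\}$, each lying in two hyperedges;
\item the three ``private'' sets $P_1=e_1\setminus\{v_1,v_k\}$, $P_2=e_2\setminus\{v_k,v_{2k-1}\}$, $P_3=e_3\setminus\{v_{2k-1},v_1\}$, each of size $k-2$.
\end{itemize}
Any vertex outside $P_r$ sees all of $P_r$ with the same sign. Inside $P_r$ the Seidel block is $-(J_{k-2}-I_{k-2})$. Hence this is a Seidel-equitable vertex partition.

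\textbf{The eigenvalue $1$.} First I would produce the eigenvalue $1$ directly. Take any vector supported on a single $P_r$ with coordinate sum zero. For rows outside $P_r$, the product vanishes because those entries are constant on $P_r$. For rows inside $P_r$, it gives $-(J-I)x=x$. This yields a $(k-3)$-dimensional eigenspace for each $r$, so $3k-9$ eigenvalues equal to $1$ in total. These vectors are orthogonal to the $6$-dimensional space of vectors constant on the classes.

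\textbf{The quotient spectrum.} The remaining six eigenvalues are the eigenvalues of the $6\times 6$ Seidel quotient matrix $Q$, using Lemma \ref{P1} together with the orthogonal complement argument. To compute $\operatorname{Spec}(Q)$ I would use the cyclic symmetry $v_1\to v_k\to v_{2k-1}$, $P_1\to P_2\to P_3$, under which $Q$ is block-circulant.

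For the symmetric sector, take vectors equal to $a$ on the corners and $b$ on the private vertices.
\begin{itemize}
\item A corner row gives $-2a-(k-2)b$. The two other corners contribute $-1$ each; the two incident private sets contribute $-2(k-2)$; the opposite private set contributes $+(k-2)$.
\item A private row gives $-a+(k-1)b$. The two corners in its edge contribute $-1$ each and the third corner $+1$; its own set contributes $-(k-3)$; the other two private sets contribute $+2(k-2)$.
\end{itemize}
The resulting $2\times2$ matrix $\begin{bmatrix}-2&-(k-2)\\-1&k-1\end{bmatrix}$ has trace $k-3$ and determinant $-(3k-4)$. Its eigenvalues are therefore $\frac{k-3\pm\sqrt{k^2+6k-7}}{2}$.

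For the two nontrivial characters $\omega,\bar\omega$ of $\mathbb{Z}_3$, the circulant blocks give complex-conjugate $2\times 2$ matrices. These have the same real spectrum, which accounts for the multiplicity two. I expect this sector to be the main obstacle: the orientation of which corner lies in which edge relative to $P_r$ must be tracked carefully so that the off-diagonal entries come out right.

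\textbf{Checking the nontrivial sector.} I would first try to avoid the complex computation. The quotient has trace $-(k-3)$ (the diagonal entries are $0$ three times and $-(k-3)$ three times), and the symmetric sector contributes trace $k-3$. So each nontrivial block has trace $\frac{-2(k-3)}{2}=-(k-3)\cdot\frac{2}{2}\cdot$, that is, $2(3-k)$ split evenly, giving trace $2(3-k)$ per block. Comparing with $\sum\rho_i^2=n(n-1)$ from Theorem \ref{lem:lin} then pins down the block determinant, since the product of its eigenvalues is $(3-k)^2-(k^2-4)=13-6k$. Concretely, I would take a real basis of the $\omega$-sector, for example vectors with corner values $(1,-1,0)$ and private values $(\,\cdot\,)$ chosen compatibly, and compute its $2\times2$ matrix. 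I would check that it has trace $2(3-k)$ and determinant $13-6k$, giving $3-k\pm\sqrt{k^2-4}$, each with multiplicity $2$.

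\textbf{Conclusion.} As a final sanity check, the total trace is $(3k-9)+4(3-k)+(k-3)=0$, as required since $\mathcal{S}(\mathcal{T}^k_3)$ has zero diagonal. Collecting the $3k-9$ eigenvalues equal to $1$, the symmetric pair and the doubled nontrivial pair gives the stated spectrum.
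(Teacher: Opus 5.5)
Your proof is correct in structure, and it reaches the six non-unit eigenvalues by a different route than the paper. The paper uses the same partition and the same quotient matrix. It then obtains the characteristic polynomial $\bigl(x^2+(2k-6)x+(13-6k)\bigr)^2\bigl(x^2-(k-3)x-(3k-4)\bigr)$ from MATLAB and asserts, via the containment Lemma~\ref{P1}, that its roots together with the $3k-9$ unit eigenvalues form the whole spectrum. You do two things differently. First, you split $\mathbb{R}^n=W\oplus W^{\perp}$, where $W$ (the class-constant vectors) is invariant by equitability and $W^{\perp}$ is exactly the span of your zero-sum eigenvectors; this settles the multiplicity bookkeeping that the paper leaves implicit. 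Second, you block-diagonalize $Q$ under the rotation $v_1\to v_k\to v_{2k-1}$. This makes the computation checkable by hand and explains why one quadratic factor appears squared: it comes from the conjugate characters $\omega,\bar\omega$. Your symmetric-sector block and its eigenvalues are right.

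The nontrivial sector has two slips, both easily repaired. First, $\operatorname{tr}Q=3\cdot 0+3\cdot\bigl(-(k-3)\bigr)=-3(k-3)$, not $-(k-3)$. Removing the symmetric block's $k-3$ leaves $-4(k-3)$ for the two conjugate blocks, i.e.\ $2(3-k)$ each; your final number is right but the displayed arithmetic is not. Second, you cannot justify the determinant by quoting $(3-k)^2-(k^2-4)$, since that presupposes the eigenvalues you are proving. The Frobenius comparison you mention does derive it. We have $\sum\rho_i^2=(3k-3)(3k-4)$, the unit eigenvalues contribute $3k-9$, and the symmetric block contributes $(k-3)^2+2(3k-4)=k^2+1$. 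So each conjugate block has $\lambda^2+\mu^2=4k^2-12k+10$, and hence $\lambda\mu=\tfrac12\bigl(4(k-3)^2-(4k^2-12k+10)\bigr)=13-6k$. Alternatively, the direct computation is short. Take $x_{C_r}=a\omega^r$ and $x_{P_r}=b\omega^r$, with $C_r\in e_{r-1}\cap e_r$ and $P_r\subset e_r$. Then
\[
M_\omega=\begin{bmatrix}1&2(k-2)\omega\\ 2\omega^2&-(2k-5)\end{bmatrix},
\]
with trace $6-2k$ and determinant $-(2k-5)-4(k-2)\omega^3=13-6k$. Either way the roots are $3-k\pm\sqrt{k^2-4}$, each with multiplicity two, and your proof goes through.
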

\proof Let $V=\{v_1,v_2,v_3,\ldots,v_{(3k-4)},v_{(3k-3)}\}$ be the vertex set of $\mathcal{T}^{k}_{3}$, and $E=\{e_1,e_2,e_3\}$ be  the hyperedge set, where $e_1=\{v_{1},v_{2},\ldots,v_{k}\}$, $e_2=\{v_{k},v_{(k+1)},\ldots,v_{(2k-1)}\}$ and $e_3=\{v_{(2k-1)},v_{2k},\ldots,v_{(3k-3)},v_1\}$.

For each $e_1,e_2,e_3\in E$, we can construct the family of $3k-9$ vectors, where $Z^i_1=[Z^i_1(t)]_{(3k-3)\times1}$, for $3\leq i\leq(k-1)$; $Z^i_2=[Z^i_2(t)]_{(3k-3)\times1}$, for $(k+2)\leq i\leq(2k-2)$; $Z^i_3=[Z^i_3(t)]_{(3k-3)\times1}$, for $(2k+1)\leq i\leq(3k-3)$ such that: 
\begin{center}
$Z^i_1(t) = \left\{ \begin{array}{rcl}
-1  & \mbox{for} & t=v_2 \\ 
1  & \mbox{for} &  t=v_i \\
0 & \mbox{for} & t\in V\setminus\{v_2,v_i\}
\end{array}\right.$
\end{center}

\begin{center}
$Z^i_2(t) = \left\{ \begin{array}{rcl}
-1  & \mbox{for} & t=v_{k+2} \\ 
1  & \mbox{for} &  t=v_i \\
0 & \mbox{for} & t\in V\setminus\{v_{k+2},v_i\}
\end{array}\right.$
\end{center}

\begin{center}
$Z^i_3(t) = \left\{ \begin{array}{rcl}
-1  & \mbox{for} & t=v_{2k+1} \\ 
1  & \mbox{for} &  t=v_i \\
0 & \mbox{for} & t\in V\setminus\{v_{2k+1},v_i\}
\end{array}\right.$
\end{center}
Then we have the following relations:

\begin{center}
$\hspace{-1cm}\mathcal{S}(\mathcal{T}^k_3)~Z^i_1=1~Z^i_1$ for all $i=3,4,\ldots,k-1$;
\end{center}

\begin{center}
$\hspace{.5cm}\mathcal{S}(\mathcal{T}^k_3)~Z^i_2=1~Z^i_2$ for all $i=k+2,k+3,\ldots,2k-2$;
\end{center}

\begin{center}
$\hspace{.8cm}\mathcal{S}(\mathcal{T}^k_3)~Z^i_3=1~Z^i_3$ for all $i=2k+1,2k+2,\ldots,3k-3$.
\end{center}
Each vector in the family has support on exactly two coordinates, with one distinguished negative coordinate and one positive coordinate. Within each family $\{Z^i_1:i=3,4,\ldots,k-1\}$, $\{Z^i_2:i=k+2,k+3,\ldots,2k-2\}$, and $\{Z^i_3:i=2k+1,2k+2,\ldots,3k-3\}$, the positive coordinate uniquely determines the vector, so each family is linearly independent. Since the three families have disjoint distinguished coordinates $v_2,v_{k+2},v_{2k+1}$, their union is also linearly independent. Hence, the total number of linearly independent eigenvectors corresponding to the eigenvalue 
1 is at least $(k-3)+(k-3)+(k-3)=3k-9$.

Let us consider the sets $\mathcal{X}_1=\{v_1\}$, $\mathcal{X}_2=\{v_k\}$, $\mathcal{X}_3=\{v_{2k-1}\}$, $\mathcal{X}_4=\{v_2,\ldots,v_{k-1}\}$, $\mathcal{X}_5=\{v_{k+1},\ldots,v_{2k-2}\}$ and $\mathcal{X}_6=\{v_{2k},\ldots,v_{3k-3}\}$.

Then the set $\{\mathcal{X}_1,\mathcal{X}_2,\mathcal{X}_3,\mathcal{X}_4,\mathcal{X}_5,\mathcal{X}_6\}$ forms a Seidel-equitable vertex partition of the vertex set $V$ of $\mathcal{S}(\mathcal{T}^k_3)$. Let $Q^{\mathcal{S}}(\mathcal{T}^k_{3})=[q^{\mathcal{S}}_{rs}]_{6\times6}$ be the quotient matrix corresponding to $\mathcal{S}(\mathcal{T}^k_3)$.

Then the entries of $q^\mathcal{S}_{rs}$ for $1\leq r,s\leq6$ are determined as follows:
\begin{align*}
&q^{\mathcal{S}}_{11}=q^{\mathcal{S}}_{22}=q^{\mathcal{S}}_{33}=0;\\
\vspace{.5cm}
&q^{\mathcal{S}}_{12}=q^{\mathcal{S}}_{21}=q^{\mathcal{S}}_{13}=q^{\mathcal{S}}_{31}=q^{\mathcal{S}}_{23}=q^{\mathcal{S}}_{32}=-1;\\
\vspace{.5cm}
&q^{\mathcal{S}}_{44}=q^{\mathcal{S}}_{55}=q^{\mathcal{S}}_{66}=-(k-3);\\
\vspace{.5cm}
&q^{\mathcal{S}}_{14}=q^{\mathcal{S}}_{16}=q^{\mathcal{S}}_{24}=q^{\mathcal{S}}_{25}=q^{\mathcal{S}}_{35}=q^{\mathcal{S}}_{36}=-(k-2); \\
\vspace{.5cm}
&q^{\mathcal{S}}_{15}=q^{\mathcal{S}}_{26}=q^{\mathcal{S}}_{34}=q^{\mathcal{S}}_{45}=q^{\mathcal{S}}_{46}=q^{\mathcal{S}}_{54}=q^{\mathcal{S}}_{56}=q^{\mathcal{S}}_{64}=q^{\mathcal{S}}_{65}=(k-2);
\end{align*}

Therefore the matrix $Q^{\mathcal{S}}(\mathcal{T}^k_3)$ is

$$ Q^{\mathcal{S}}(\mathcal{T}^k_3) = \begin{bmatrix} 
0 & -1 & -1 & -(k-2) & (k-2) & -(k-2) \\
    
-1 & 0 & -1 & -(k-2) & -(k-2) & (k-2)  \\
    
-1 & -1 & 0 & (k-2) & -(k-2) & -(k-2) \\
    
-1 & -1 & 1 & -(k-3) & (k-2) & (k-2) \\

1 & -1 & -1 & (k-2) & -(k-3) & (k-2) \\

-1 & 1 & -1 & (k-2) & (k-2) & -(k-3) \\  
\end{bmatrix}$$

Using MATLAB, we obtain that the characteristic polynomial of $Q^{\mathcal{S}}(\mathcal{T}^k_3)$ is 
\begin{center}
$P_{(\mathcal{T}^k_3)}(x)=\Big(x^2+(2k-6)x+(13-6k)\Big)^2\Big(x^2-(k-3)x-(3k-4)\Big)$.    
\end{center}

The roots of the equation $P_{(\mathcal{T}^k_3)}(x)=0$ are $3-k\pm\sqrt{k^2-4}$, $3-k\pm\sqrt{k^2-4}$ and $\frac{k-3\pm\sqrt{k^2+6k-7}}{2}$.

Using Lemma \ref{P1}, we deduce that the spectrum of $Q^{\mathcal{S}}(\mathcal{T}^k_3)$ is included in the Seidel spectrum of $\mathcal{T}^{k}_{3}$. Therefore, the solutions of the equation $P_{(\mathcal{T}^k_3)}(x)=0$ are precisely the Seidel eigenvalues of $\mathcal{T}^{k}_{3}$.

Hence, the Seidel spectrum of $\mathcal{T}^k_3$ is 

\[
\operatorname{Spec}(\mathcal{S}(\mathcal{T}^k_3))=\bigg\{\overbrace{1,1,\ldots,1,1}^{(3k-9)},~3-k\pm\sqrt{k^2-4},~3-k\pm\sqrt{k^2-4},~\frac{k-3\pm\sqrt{k^2+6k-7}}{2}\bigg\}. 
\]\qe

The following theorem gives the explicit formula for the Seidel energy of the $k$-uniform hypertriangle. 

\begin{theorem}\label{HY3}
Let $\mathcal{T}^{k}_{3}$ be the $k$-uniform hypertriangle. Then the Seidel energy of $\mathcal{T}^{k}_{3}$ is \begin{center}
$\mathcal{E}(\mathcal{S}(\mathcal{T}^k_3))=(3k-9)+4\sqrt{k^2-4}+\sqrt{k^2+6k-7}$.    
\end{center}
\end{theorem}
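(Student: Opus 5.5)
The plan is to read the energy straight off the Seidel spectrum in Lemma \ref{LHY3} and sum absolute values. The only real work is deciding the sign of each non-trivial eigenvalue, so that the absolute values can be removed. Throughout I assume $k\ge 3$, which the definition of $\mathcal{T}^k_3$ implicitly requires: only then is the multiplicity $3k-9$ of the eigenvalue $1$ nonnegative.

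The first step is the trivial part. The $3k-9$ eigenvalues equal to $1$ contribute exactly $3k-9$.

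The second step handles the pair $3-k\pm\sqrt{k^2-4}$, which occurs twice. The root $3-k-\sqrt{k^2-4}$ is clearly negative. For the other root, I show $\sqrt{k^2-4}>k-3$. Since $k-3\ge 0$, it suffices to compare squares: $k^2-4>(k-3)^2=k^2-6k+9$ holds if and only if $6k>13$, which is true for $k\ge 3$. So the two roots have opposite signs. Hence
\[
|3-k+\sqrt{k^2-4}|+|3-k-\sqrt{k^2-4}| = 2\sqrt{k^2-4}.
\]
With multiplicity two, this pair contributes $4\sqrt{k^2-4}$.

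The third step handles the remaining two eigenvalues, the roots of $x^2-(k-3)x-(3k-4)$. Their product is $-(3k-4)<0$, so one is positive and one is negative. The sum of their absolute values therefore equals their difference, which is the square root of the discriminant: $\sqrt{(k-3)^2+4(3k-4)}=\sqrt{k^2+6k-7}$.

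Adding the three contributions gives $(3k-9)+4\sqrt{k^2-4}+\sqrt{k^2+6k-7}$. The only step needing care is the sign check in the second step, and that reduces to the elementary inequality $6k>13$.
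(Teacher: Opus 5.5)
Your proof is correct and follows the same route as the paper, which reads the energy directly off the spectrum in Lemma \ref{LHY3}; your sign analysis supplies exactly the details the paper leaves implicit, namely $\sqrt{k^2-4}>k-3$ (equivalently $6k>13$) and the negative product $-(3k-4)$ of the last two roots. Your assumption $k\ge 3$ is genuinely needed, though it is the spectral formula of Lemma \ref{LHY3}, not the definition, that requires it: $\mathcal{T}^2_3$ is just $K_3$, whose Seidel energy is $4$, while the formula would give $0$.
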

\proof The proof follows from Lemma \ref{LHY3}.\qe

The following theorem answers \textbf{Question \ref{Queee}}.

\begin{theorem}\label{thm:haemers-fails}
For every integer $k\ge 3$, there exists a $k$-uniform hypergraph $\mathcal H$ on $n=3k-3$ vertices such that
\[
\mathcal E\bigl(\mathcal S(\mathcal H)\bigr)
<
\mathcal E\bigl(\mathcal S(\mathcal C_n^k)\bigr).
\]
Hence, the hypergraph analogue of Haemers’ Conjecture fails for $k$-uniform hypergraphs.
\end{theorem}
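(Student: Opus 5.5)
We take $\mathcal H=\mathcal T^k_3$, the $k$-uniform hypertriangle, which has exactly $n=3k-3$ vertices. The two energies are already known in closed form: Theorem \ref{HY3} for $\mathcal T^k_3$ and Theorem \ref{HY2} for $\mathcal C^k_n$. So the whole argument is an explicit comparison of two formulas in $k$. The plan is to bound the first from above by a linear function of $k$, bound the second from below by a quadratic, and then compare.

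\emph{Upper bound for the hypertriangle.} By Theorem \ref{HY3},
\[
\mathcal E(\mathcal S(\mathcal T^k_3))=(3k-9)+4\sqrt{k^2-4}+\sqrt{k^2+6k-7}.
\]
We have $\sqrt{k^2-4}<k$. We also have $\sqrt{k^2+6k-7}<k+3$, because $(k+3)^2=k^2+6k+9$. Hence
\[
\mathcal E(\mathcal S(\mathcal T^k_3))<(3k-9)+4k+(k+3)=8k-6 .
\]
For $k=3$ the block of eigenvalues equal to $1$ in Lemma \ref{LHY3} is empty, and the formula still applies.

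\emph{Lower bound for the complete hypergraph.} By Theorem \ref{HY2} with $n=3k-3$,
\[
\mathcal E(\mathcal S(\mathcal C^k_n))=2(3k-4)\Big(2\binom{3k-5}{k-2}-1\Big).
\]
For $k\ge3$ we have $1\le k-2\le (3k-5)-1$, so $\binom{3k-5}{k-2}\ge 3k-5$. This gives
\[
\mathcal E(\mathcal S(\mathcal C^k_n))\ge 2(3k-4)(6k-11).
\]

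\emph{Comparison.} It remains to check $2(3k-4)(6k-11)\ge 8k-6$ for all $k\ge3$. Expanding, this is $36k^2-130k+94\ge 0$. At $k=3$ the left side equals $40>0$. For $k\ge 3$ the quadratic is increasing, since $72k-130>0$. Combining the two bounds gives the strict inequality
\[
\mathcal E(\mathcal S(\mathcal T^k_3))<\mathcal E(\mathcal S(\mathcal C^k_n)).
\]
As a sanity check at $k=3$: the hypertriangle has energy $4\sqrt5+\sqrt{20}=6\sqrt5\approx13.4$, while the complete hypergraph has energy $70$. This answers Question \ref{Queee} negatively.

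The only delicate point is to keep every estimate valid uniformly for all $k\ge3$, in particular at the boundary case $k=3$. I therefore use only the crude bounds above, together with the elementary estimate $\binom{m}{j}\ge m$ for $1\le j\le m-1$; no asymptotics are needed.
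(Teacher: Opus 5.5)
Your proof is correct and follows the paper's route. You use the same hypertriangle $\mathcal T^k_3$, the same closed forms from Theorems \ref{HY2} and \ref{HY3}, and the same bound $\mathcal E(\mathcal S(\mathcal T^k_3))<8k-6$; the only difference is that your linear estimate $\binom{3k-5}{k-2}\ge 3k-5$ replaces the paper's $\binom{3k-5}{k-2}\ge 2^{k-2}$, so you handle all $k\ge3$ at once instead of checking $k=3,4,5$ numerically.

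There is one arithmetic slip: $2(3k-4)(6k-11)-(8k-6)=36k^2-122k+94$, which equals $52$ at $k=3$, not $36k^2-130k+94$ and $40$. The conclusion is unaffected.
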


\proof
Let $\mathcal H:=\mathcal{T}^k_3$, where $\mathcal{T}^k_3$ is the $k$-uniform hypertriangle of length $3$. Then $\mathcal H$ has $n=3k-3$ vertices. Therefore, it suffices to show that
\[
\mathcal E\bigl(\mathcal S(\mathcal{T}^k_3)\bigr)
<
\mathcal E\bigl(\mathcal S(\mathcal C_{3k-3}^k)\bigr).
\]
By Theorem \ref{HY2},
\begin{equation}\label{eq:complete-energy-new}
\mathcal E\bigl(\mathcal S(\mathcal C_{3k-3}^k)\bigr)
=
2(3k-4)\left(2\binom{3k-5}{k-2}-1\right).
\end{equation}
By Theorem \ref{HY3},
\begin{equation}\label{eq:triangle-energy-new}
\mathcal E\bigl(\mathcal S(\mathcal{T}^k_3)\bigr)
=
(3k-9)+4\sqrt{k^2-4}+\sqrt{k^2+6k-7}.
\end{equation}
We consider two cases.

\medskip
\noindent\textbf{Case 1:} $k=3,4,5$.

For $k=3$, by equation \eqref{eq:complete-energy-new},
\[
\mathcal E\bigl(\mathcal S(\mathcal C_{3k-3}^k)\bigr)
=
2\cdot5\cdot\left(2\binom{4}{1}-1\right)
=
70,
\]
and by equation \eqref{eq:triangle-energy-new},
\[
\mathcal E\bigl(\mathcal S(\mathcal{T}^k_3)\bigr)
=
0+4\sqrt{5}+\sqrt{20}
\approx 13.42.
\]
Hence
\[
\mathcal E\bigl(\mathcal S(\mathcal{T}^k_3)\bigr)
<
\mathcal E\bigl(\mathcal S(\mathcal C_{3k-3}^k)\bigr).
\]
For $k=4$, by equation \eqref{eq:complete-energy-new},
\[
\mathcal E\bigl(\mathcal S(\mathcal C_{3k-3}^k)\bigr)
=
2\cdot8\cdot\left(2\binom{7}{2}-1\right)
=
656,
\]
and by equation \eqref{eq:triangle-energy-new},
\[
\mathcal E\bigl(\mathcal S(\mathcal{T}^k_3)\bigr)
=
3+4\sqrt{12}+\sqrt{33}
\approx 22.60.
\]
Hence
\[
\mathcal E\bigl(\mathcal S(\mathcal{T}^k_3)\bigr)
<
\mathcal E\bigl(\mathcal S(\mathcal C_{3k-3}^k)\bigr).
\]
For $k=5$, by equation \eqref{eq:complete-energy-new},
\[
\mathcal E\bigl(\mathcal S(\mathcal C_{3k-3}^k)\bigr)
=
2\cdot11\cdot\left(2\binom{10}{3}-1\right)
=
5258,
\]
and by equation \eqref{eq:triangle-energy-new},
\[
\mathcal E\bigl(\mathcal S(\mathcal{T}^k_3)\bigr)
=
6+4\sqrt{21}+\sqrt{48}
\approx 31.28.
\]
Hence
\[
\mathcal E\bigl(\mathcal S(\mathcal{T}^k_3)\bigr)
<
\mathcal E\bigl(\mathcal S(\mathcal C_{3k-3}^k)\bigr).
\]

\medskip
\noindent\textbf{Case 2:} $k\ge 6$.

For integers $1\le b\le a$, we have
\[
\binom{a}{b}
=
\frac{a}{1}\cdot \frac{a-1}{2}\cdots \frac{a-b+1}{b}
=
\prod_{i=0}^{b-1}\frac{a-i}{i+1}
\]
For each $i=0,1,\dots,b-1$,
\[
a-i\ge a-b+1
\qquad\text{and}\qquad
i+1\le b.
\]
Therefore,
\[
\frac{a-i}{i+1}\ge \frac{a-b+1}{b}.
\]
Multiplying these inequalities for $i=0,1,\dots,b-1$, we obtain
\[
\binom{a}{b}
=
\prod_{i=0}^{b-1}\frac{a-i}{i+1}
\ge
\left(\frac{a-b+1}{b}\right)^b.
\]
Taking $a=3k-5~\text{and}~b=k-2$. Then $a-b+1=(3k-5)-(k-2)+1=2k-2$, and therefore
\[
\binom{3k-5}{k-2}
\ge
\left(\frac{2k-2}{k-2}\right)^{k-2}
=
\left(2+\frac{2}{k-2}\right)^{k-2}
\ge 2^{k-2}.
\]
Substituting this into equation \eqref{eq:complete-energy-new}, we obtain
\begin{equation}\label{eq:complete-lower-new}
\mathcal E\bigl(\mathcal S(\mathcal C_{3k-3}^k)\bigr)
\ge
2(3k-4)\bigl(2^{k-1}-1\bigr).
\end{equation}
Next, from equation \eqref{eq:triangle-energy-new},
\[
\sqrt{k^2-4}<k
\qquad\text{and}\qquad
\sqrt{k^2+6k-7}<k+3.
\]
Hence
\begin{equation}\label{eq:triangle-upper-new}
\mathcal E\bigl(\mathcal S(\mathcal{T}^k_3)\bigr)
<
(3k-9)+4k+(k+3)
=
8k-6.
\end{equation}
Therefore, it is enough to show that
\[
2(3k-4)\bigl(2^{k-1}-1\bigr)>8k-6,~~\text{for }k\ge 6.
\]
But this is immediate, since for $k\ge 6$,
\[
(2^{k-1}-1)\ge 31
\]
and hence
\[
2(3k-4)\bigl(2^{k-1}-1\bigr)
\ge
62(3k-4)
>
8k-6.
\]
Combining equations \eqref{eq:complete-lower-new} and \eqref{eq:triangle-upper-new}, we obtain
\[
\mathcal E\bigl(\mathcal S(\mathcal C_{3k-3}^k)\bigr)
>
\mathcal E\bigl(\mathcal S(\mathcal{T}^k_3)\bigr),~~\text{for all }k\ge 6.
\]
Therefore, in all cases $k\ge 3$, we have
\[
\mathcal E\bigl(\mathcal S(\mathcal{T}^k_3)\bigr)
<
\mathcal E\bigl(\mathcal S(\mathcal C_{3k-3}^k)\bigr).
\]
Hence there exists a $k$-uniform hypergraph $\mathcal H$ on $n=3k-3$ vertices such that
\[
\mathcal E\bigl(\mathcal S(\mathcal H)\bigr)
<
\mathcal E\bigl(\mathcal S(\mathcal C_n^k)\bigr).
\]
This completes the proof.\qe

\section{Random uniform hypergraphs and Seidel non-hypoenergeticity}

Random graphs are fundamental objects in probabilistic combinatorics. In the late 1950s and early 1960s, Erd\H{o}s and R\'enyi \cite{ER1959, ER1960, ER1961b} introduced the concept of a random graph, leading to several important models. Among the best known are the uniform model $G(n, M)$, in which a graph is chosen uniformly from all graphs with $n$ vertices and $M$ edges, and the binomial model $G(n,p)$, in which each of the $\binom{n}{2}$ possible edges is included independently with probability $p$. These models have been used extensively to study the asymptotic properties of graphs \cite{Bollobas2001, Spencer1990}. In 2007, Nikiforov \cite{VN2007} obtained the asymptotic behavior of the energy of a graph on $n$ vertices in the Erd\H{o}s-R\'enyi random graph model. Later, in 2011, Gutman \cite{IG2011} used Nikiforov's energy bound to characterize non-hypoenergetic graphs in the random graph model $G(n,p)$.

The Erd\H{o}s-R\'enyi model admits a natural analogue for hypergraphs. For integers $n\ge k\ge2$, the random $k$-uniform hypergraph $\mathcal{R}(n,k,p)$ is defined on the vertex set $[n]=\{1,2,\dots,n\}$ by including each $k$-subset independently as a hyperedge with probability $p$, see \cite{Behrisch2010, Cooley2018, Schmidt1975}.

Motivated by the classical notions of hypoenergetic and non-hypoenergetic graphs, in this section, we introduce the concepts of Seidel hypoenergetic and Seidel non-hypoenergetic hypergraphs. We then investigate Seidel non-hypoenergeticity for $k$-uniform hypergraphs in the random hypergraph model $\mathcal{R}(n, k, p)$.

\begin{definition}
A Bernoulli random variable with parameter $p\in(0,1)$ is a random variable $X$ taking values in $\{0,1\}$ such that
\[
\mathbb{P}(X=1)=p
\quad\text{and}\quad
\mathbb{P}(X=0)=1-p.
\]
\end{definition}
Random graphs are generated by assigning independent Bernoulli random variables to the possible edges. The classical Erd\H{o}s-R\'enyi model $G(n,p)$ treats each of the $\binom{n}{2}$ possible edges independently.

\begin{definition}
Let $n \in \mathbb{N}$ and $p \in [0,1]$. The Erd\H{o}s-R\'enyi random graph, denoted by $G(n,p)$, is a random graph on $n$ vertices in which each edge is included independently with probability~$p$. Formally, $G(n,p)$ is the probability space $\big( \mathcal{G}(n),\, 2^{\mathcal{G}(n)},\, \mathbb{P} \big)$, where $\mathcal{G}(n)$ denotes the collection of all graphs on $n$ labeled vertices. For $G \in \mathcal{G}(n)$ with edge set $E(G)$,
\begin{equation}
\label{eq:ERprob}
\mathbb{P}(\{G\}) 
  = \prod_{e \in E(G)} p 
    \prod_{e \notin E(G)} (1-p)
  = p^{|E(G)|}(1-p)^{\binom{n}{2}-|E(G)|}.
\end{equation}
\end{definition}
To illustrate the probabilistic construction of random graphs, let us examine the simplest nontrivial case of the Erd\H{o}s--R\'enyi model.

\begin{example}
Consider the Erd\H{o}s-R\'enyi random graph $G(3, \tfrac{1}{2})$. There are $\binom{3}{2} = 3$ possible edges among three labeled vertices $\{1,2,3\}$, namely  $e_1 = \{1,2\}$, $e_2 = \{1,3\}$, and $e_3 = \{2,3\}$.

Each edge is included independently with probability $p = \tfrac{1}{2}$. Thus, every graph $G$ on these three vertices has probability
\[
\mathbb{P}(\{G\}) = \Big(\tfrac{1}{2}\Big)^{|E(G)|} \Big(1-\tfrac{1}{2}\Big)^{3 - |E(G)|}= \Big(\tfrac{1}{2}\Big)^3 = \tfrac{1}{8}.
\]
Hence, all $2^3 = 8$ labeled graphs on $\{1,2,3\}$ are equally likely.
\end{example}

\begin{definition}
Let $G\sim G(n,p)$ be the Erd\H{o}s-R\'enyi random graph, i.e., $G$ is sampled from the Erdős–Rényi random graph model $G(n, p)$. For each unordered pair $\{i,j\}$ of distinct vertices, define
the random variable
\[
a_{ij} =
\begin{cases}
1, & \text{with probability } p,\\
0, & \text{with probability } 1-p,
\end{cases}
\]
and assume that the family $\{a_{ij} : 1\le i<j\le n\}$ is independent. Thus, the adjacency matrix of $G$ is $A=(a_{ij})$ with $a_{ji}=a_{ij}$ and $a_{ii}=0$.
\end{definition}
In 2007, Nikiforov obtained the following result for the Erd\H{o}s-R\'enyi random graph model, describing the asymptotic behavior of the energy of a graph on $n$ vertices.

\begin{lemma}\rm\big(Nikiforov, \cite[Page 1474, Line 5]{VN2007}\big)
Let $G \sim G\!\left(n,\tfrac12\right)$, and let $A=A(G)$ be its adjacency matrix. Then for almost all graphs $G$ with $n$ vertices, the energy of a graph $G$ is
\[
\mathcal{E}(G)
= \left(\frac{4}{3\pi}+o(1)\right)n^{3/2}.
\]
\end{lemma}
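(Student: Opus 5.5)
The claim is Nikiforov's theorem: for $G\sim G(n,\tfrac12)$, almost surely $\mathcal E(G)=\left(\frac{4}{3\pi}+o(1)\right)n^{3/2}$. The plan is to center the adjacency matrix, apply Wigner's semicircle law to the centered part, integrate to obtain the energy, and show that the rank-one mean term changes the energy only by a lower-order amount.

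\textbf{Centering.} Write
\[
A=\tfrac12(J_n-I_n)+\tfrac12 W,
\]
where $W=2A-(J_n-I_n)$ is symmetric with zero diagonal. Its off-diagonal entries are independent $\pm1$ with mean $0$ and variance $1$. In other words, $W$ is exactly the Seidel matrix $S(G)$ up to sign.

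\textbf{Energy of the centered part.} By Wigner's semicircle law, the empirical spectral distribution of $W/\sqrt n$ converges almost surely to the density $\frac{1}{2\pi}\sqrt{4-x^2}$ on $[-2,2]$. I would combine this with two facts. First, uniform integrability, which follows from $\frac1n\sum_i(\lambda_i/\sqrt n)^2=\frac{\|W\|_F^2}{n^2}=\frac{n-1}{n}$. Second, the fact that $|x|$ is continuous. Together these give
\[
\frac{1}{n^{3/2}}\sum_i|\lambda_i(W)|\to\frac{1}{2\pi}\int_{-2}^{2}|x|\sqrt{4-x^2}\,dx=\frac{8}{3\pi}.
\]
This yields $\mathcal E(W)=\left(\frac{8}{3\pi}+o(1)\right)n^{3/2}$, and hence $\mathcal E(\tfrac12W)=\left(\frac{4}{3\pi}+o(1)\right)n^{3/2}$.

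\textbf{Adding back the mean.} Energy is the Schatten $1$-norm, so it is subadditive. Therefore
\[
\bigl|\mathcal E(A)-\mathcal E(\tfrac12 W)\bigr|\le \|\tfrac12(J_n-I_n)\|_*=\tfrac12\bigl((n-1)+(n-1)\bigr)=n-1=o(n^{3/2}).
\]
This bound gives the stated asymptotic.

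\textbf{Main obstacle.} The main obstacle is upgrading the weak convergence to convergence of the first absolute moment, almost surely rather than merely in expectation. I would handle this with concentration of Lipschitz spectral functionals, using Guionnet--Zeitouni or Talagrand. Alternatively, I would control the tails of the empirical measure via $\|W\|\le(2+o(1))\sqrt n$ almost surely, by F\"uredi--Koml\'os. Once the spectrum is confined to $[-(2+\epsilon)\sqrt n,(2+\epsilon)\sqrt n]$, the function $|x|$ is bounded and continuous on that range, and weak convergence suffices.
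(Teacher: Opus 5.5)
Your argument is correct. It is essentially the argument of the cited source (the paper itself gives no proof and only quotes Nikiforov): the semicircle law for the $\pm1$ matrix $W=2A-(J_n-I_n)$ gives $\mathcal E(W)=\bigl(\frac{8}{3\pi}+o(1)\bigr)n^{3/2}$, and Ky Fan subadditivity of the trace norm absorbs $\frac12(J_n-I_n)$ at cost $n-1=o(n^{3/2})$.

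Your ``main obstacle'' is already settled by your own observation that $\frac1n\sum_i(\lambda_i/\sqrt n)^2=\frac{n-1}{n}$ holds deterministically. Truncating $|x|$ at level $M>2$ therefore costs at most $1/M$ uniformly in $n$. So the first absolute moment converges on the same event where the semicircle law holds, and convergence in probability is all that ``almost all graphs'' requires. Neither F\"uredi--Koml\'os nor concentration inequalities are needed.
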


\begin{remark}
The phrase ``for almost all graphs $G$'' means that
\[
\mathbb{P}\!\left(
\mathcal{E}(G)
= \left(\frac{4}{3\pi}+o(1)\right)n^{3/2}
\right) \longrightarrow 1
\quad \text{as } n\to\infty,
\]
where $G \sim G\!\left(n,\tfrac12\right)$.
\end{remark}
In 2011, Gutman introduced the notions of hypoenergetic and non-hypoenergetic graphs \cite{IG2011}.

\begin{definition}\label{dr}
A graph $G$ on $n$ vertices is called hypoenergetic if $\mathcal{E}(G)<n$ and is called non-hypoenergetic if $\mathcal{E}(G)\ge n$, where $\mathcal{E}(G)$ denotes the classical graph energy.
\end{definition}
In 2011, Gutman proved the following theorem.

\begin{theorem}\rm\big(Gutman, \cite[Theorem~4.4]{IG2011}\big)\label{Gutrman}
Almost all graphs are non-hypoenergetic.    
\end{theorem}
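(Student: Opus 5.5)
The statement is Gutman's theorem: for $G\sim G(n,\tfrac12)$, $\mathbb{P}(\mathcal{E}(G)\ge n)\to1$ as $n\to\infty$. The natural route is to derive it directly from Nikiforov's asymptotic formula, quoted as the lemma just before Definition \ref{dr}. That lemma says that for almost all graphs $G$ of order $n$,
\[
\mathcal{E}(G)=\left(\frac{4}{3\pi}+o(1)\right)n^{3/2}.
\]
I would first make the $o(1)$ explicit. There is a deterministic sequence $\varepsilon_n\to0$ such that the event
\[
A_n=\Bigl\{\,\bigl|\mathcal{E}(G)-\tfrac{4}{3\pi}n^{3/2}\bigr|\le \varepsilon_n n^{3/2}\Bigr\}
\]
satisfies $\mathbb{P}(A_n)\to1$. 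This is exactly the meaning of ``for almost all graphs'' given in the remark following that lemma.

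The key comparison is elementary. On $A_n$ we have
\[
\mathcal{E}(G)\ge\Bigl(\tfrac{4}{3\pi}-\varepsilon_n\Bigr)n^{3/2}.
\]
Choose $n_0$ such that $\varepsilon_n\le \tfrac{2}{3\pi}$ for all $n\ge n_0$. Then on $A_n$,
\[
\mathcal{E}(G)\ge \tfrac{2}{3\pi}\,n^{3/2}=n\cdot\tfrac{2}{3\pi}\sqrt n .
\]
This is at least $n$ as soon as $\sqrt n\ge \tfrac{3\pi}{2}$, that is, $n\ge 23$. Hence for $n\ge\max(n_0,23)$ we have $A_n\subseteq\{\mathcal{E}(G)\ge n\}$. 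It follows that $\mathbb{P}(G \text{ non-hypoenergetic})\ge\mathbb{P}(A_n)\to1$, which is the statement in the sense of Definition \ref{dr}.

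The only delicate point is interpretive: we must make sure the ``$o(1)$'' in Nikiforov's lemma is a deterministic rate holding with probability tending to $1$, and not a random quantity. If only convergence in probability of $\mathcal{E}(G)/n^{3/2}$ to $\tfrac{4}{3\pi}$ is available, the same argument applies verbatim with the fixed tolerance $\tfrac{2}{3\pi}$ in place of $\varepsilon_n$. So the proof needs no more than the stated lemma.

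If a self-contained argument were preferred, a fallback is $\mathcal{E}(G)\ge \|A\|_F^2/\lambda_1 = 2|E(G)|/\lambda_1$, using $\sum\lambda_i^2\le\lambda_1\sum|\lambda_i|$. Combined with $|E(G)|\approx n^2/4$ and $\lambda_1\le n-1$ (Chernoff bounds), this gives $\mathcal{E}(G)\gtrsim n/2$, which falls short of $n$. Getting past $n$ would require the stronger bound $\lambda_2=O(\sqrt n)$, so I would rely on Nikiforov's lemma instead.
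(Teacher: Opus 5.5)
Your argument is correct and follows essentially the route the paper indicates. The paper gives no proof of its own but attributes the result to Gutman, who, it says, used Nikiforov's estimate $\mathcal{E}(G)=\bigl(\frac{4}{3\pi}+o(1)\bigr)n^{3/2}$ for almost all graphs — exactly your deduction, with the $o(1)$ handled more carefully via the fixed tolerance $\frac{2}{3\pi}$ and the threshold $n\ge 23$.
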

Therefore, Theorem \ref{Gutrman} provides a characterization of all non-hypoenergetic graphs. Motivated by the above discussion, we first extend the concepts of hypoenergeticity and non-hypoenergeticity to hypergraphs using the Seidel matrix of hypergraphs, following Definition \ref{dr}.

\begin{definition}
A hypergraph $\mathcal{H}$ on $n$ vertices is said to be Seidel hypoenergetic if $\mathcal{E}(\mathcal{S(H)})<n$ and Seidel non-hypoenergetic if $\mathcal{E}(\mathcal{S(H)})\ge n$.
\end{definition}
We now pose the following question.

\begin{question}\label{trtrtr}
Are almost all $k$-uniform hypergraphs Seidel non-hypoenergetic?
\end{question}
\textbf{Answer.} The answer is affirmative. 

Before we answer Question \ref{trtrtr}, we first need to study some basic concepts of random hypergraphs.

\begin{definition}
The $k$-uniform random hypergraph $\mathcal{R}(n,k,p)$ generalizes $G(n,p)$. Given integers $n \ge 2$ and $1 \le k \le n$, $\mathcal{R}(n,k,p)$ is the probability space $\big( \mathcal{H}^k_n,\, 2^{\mathcal{H}^k_n},\, \mathbb{P} \big)$, where $\mathcal{H}^k_n$ denotes the set of all $k$-uniform hypergraphs with vertex set $[n]$. For $\mathcal G \in \mathcal{H}^k_n$ with edge set $E(\cal G)$,
\begin{equation}
\label{eq:hyperprob}
\mathbb{P}(\{\mathcal G\})=\prod_{e \in E(\mathcal G)} p \prod_{e \in \binom{[n]}{k}\setminus E(\mathcal G)}(1-p)= p^{|E(\mathcal G)|}(1-p)^{\binom{n}{k}-|E(\mathcal G)|}.
\end{equation}
Clearly, when $k=2$, $\mathcal{R}(n,2,p)$ coincides with the Erd\H{o}s-R\'enyi graph~$G(n,p)$.
\end{definition}

To illustrate the probabilistic model for random hypergraphs, let us consider the simplest nontrivial case of a $3$-uniform random hypergraph.

\begin{example}
Consider the $3$-uniform random hypergraph $\mathcal{R}(4,3,\tfrac{1}{2})$. The vertex set is $[4]=\{1,2,3,4\}$. All possible $3$-element subsets of $[4]$ are
\[
\binom{[4]}{3}=\big\{\{1,2,3\},\,\{1,2,4\},\,\{1,3,4\},\,\{2,3,4\}\big\},
\]

so there are $\binom{4}{3}=4$ possible hyperedges in total.

Each potential hyperedge is included independently with probability $p=\tfrac{1}{2}$. Hence, the probability of any particular $3$-uniform hypergraph $\mathcal{H}$ with $|E(\mathcal{H})|$ edges is
\[
\mathbb{P}(\{\mathcal{H}\}) 
  = \Big(\tfrac{1}{2}\Big)^{|E(\mathcal{H})|}
    \Big(1-\tfrac{1}{2}\Big)^{4-|E(\mathcal{H})|}
  = \Big(\tfrac{1}{2}\Big)^4
  = \tfrac{1}{16}.
\]
Therefore, all $2^4=16$ possible $3$-uniform hypergraphs on $[4]$ are equally likely.  
\end{example}

\begin{definition}
A random variable $X$ is said to have the binomial distribution with parameters $M$ and $p$, denoted by $\mathrm{Binomial}(M,p)$, if
\[
\mathbb{P}(X = k) = \binom{M}{k} p^k (1-p)^{M-k},
\qquad k = 0,1,\dots,M.
\]
\end{definition}

\begin{definition}
Let $X$ be a discrete random variable. The mean (or expected value) of $X$ is defined by
\[
\mathbb{E}[X] = \sum_x x\,\mathbb{P}(X=x),
\]
provided the sum converges. The variance of $X$ is defined by
\[
\operatorname{Var}(X)=\mathbb{E}\big[(X-\mathbb{E}[X])^2\big]
=\mathbb{E}[X^2]-(\mathbb{E}[X])^2.
\]
\end{definition}

\begin{remark}\label{reeee}

The following holds:
\begin{enumerate}
\item If $X \sim \mathrm{Binomial}(M,p)$, then
\[
\mathbb{E}[X]=Mp
\quad \text{and} \quad
\operatorname{Var}(X)=Mp(1-p).
\]

\item Let $\mathcal{H}=(V, E)\sim \mathcal{R}(n,k,p)$ be a random $k$-uniform hypergraph on $n$ vertices, where each $k$-subset of $V$ is included independently as a hyperedge with probability $p$. For a fixed distinct $i,j\in V$, the random variable $c_{ij}$ counts the number of hyperedges containing $\{i,j\}$. There are exactly $\binom{n-2}{k-2}$ such $k$-subsets of $V$, and each is included independently with probability $p$. Hence
\[
c_{ij}\sim \mathrm{Binomial}\!\left(\binom{n-2}{k-2},\,p\right),~~ 1 \le i < j \le n,~~\mathbb{E}[c_{ij}]=p\binom{n-2}{k-2}.
\]
\end{enumerate}
\end{remark}
The following relations between functions are classic.

\begin{definition}
Let $f,g:\mathbb{N}\to\mathbb{R}$ be two functions with $g(n)\neq 0$ for all sufficiently large $n$. Then, as $n\to\infty$:

\begin{enumerate}
    \item $f(n)=O(g(n))$ if there exist constants $C>0$ and $n_0\in\mathbb{N}$ such that
    \[
    |f(n)|\le C|g(n)| \qquad \text{for all } n\ge n_0.
    \]

    \item $f(n)=o(g(n))$ if
    \[
    \lim_{n\to\infty}\frac{f(n)}{g(n)}=0.
    \]
\end{enumerate}
\end{definition}

\begin{lemma}\label{newla}
Let $n>2$ be a positive integer. Then for a fixed positive integer $k\geq3$, 
\begin{center}
$\binom{n-2}{k-2}=\frac{n^{k-2}}{(k-2)!}\prod\limits_{r=2}^{k-1}\big(1-\frac{r}{n}\big)$
\end{center}    
\end{lemma}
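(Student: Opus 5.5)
We prove the identity by writing the binomial coefficient as a falling factorial and factoring out the powers of $n$. Since $k\ge 3$ is fixed and $n>2$, we use
\[
\binom{n-2}{k-2}=\frac{(n-2)(n-3)\cdots(n-(k-1))}{(k-2)!}=\frac{1}{(k-2)!}\prod_{r=2}^{k-1}(n-r).
\]
The numerator has exactly $k-2$ factors, indexed by $r=2,3,\dots,k-1$. This identity holds for every integer $n\ge 2$; when $n-2<k-2$, one factor $n-r$ with $r=n\le k-1$ vanishes, so both sides equal $0$.

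Next we factor $n$ out of each term: $n-r=n\left(1-\frac{r}{n}\right)$, which is valid because $n>2>0$. The product then becomes
\[
\prod_{r=2}^{k-1}(n-r)=n^{k-2}\prod_{r=2}^{k-1}\left(1-\frac{r}{n}\right).
\]
Dividing by $(k-2)!$ gives the stated formula.

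There is no real obstacle here. The only things to check are that the number of factors is $k-2$, so the power is $n^{k-2}$, and that the vanishing case $n<k$ is consistent on both sides. The purpose of the lemma is the consequence that for fixed $k$ the product tends to $1$, so $\binom{n-2}{k-2}=\frac{n^{k-2}}{(k-2)!}(1+o(1))$. We will use this asymptotic form later when estimating $\mathbb{E}[c_{ij}]$ and the Seidel energy in $\mathcal{R}(n,k,p)$.
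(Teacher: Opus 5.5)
Your proof is correct and is essentially the paper's argument: write $\binom{n-2}{k-2}$ as the falling factorial $(n-2)(n-3)\cdots(n-k+1)/(k-2)!$ and pull a factor $n$ out of each of the $k-2$ terms. Your treatment of the case $n<k$, where the factor $n-r$ with $r=n$ vanishes, is a small improvement, since the paper's intermediate expression $\frac{(n-2)!}{(n-k)!(k-2)!}$ only makes sense for $n\ge k$.
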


\proof For a positive integer $n>2$ and a fixed positive integer $k\geq3$,
\begin{align*}
\binom{n-2}{k-2}
&=\frac{(n-2)!}{(n-k)!(k-2)!},\\
&=\frac{(n-2)(n-3)(n-4)\cdots\cdots(n-k+2)(n-k+1)}{(k-2)!},\\
&=\frac{n^{k-2}\big(1-\frac{2}{n}\big)\big(1-\frac{3}{n}\big)\big(1-\frac{4}{n}\big)\cdots\cdots\big(1-\frac{k-2}{n}\big)\big(1-\frac{k-1}{n}\big)}{(k-2)!},\\
&=\frac{n^{k-2}}{(k-2)!}\prod\limits_{r=2}^{k-1}\big(1-\frac{r}{n}\big).
\end{align*}
This completes the proof.\qe

\begin{lemma}\label{ll}
Let $n>2$ be a large positive integer. Then for a fixed positive integer $k\geq3$, 
\begin{center}
$\binom{n-2}{k-2}\approx\frac{n^{k-2}}{(k-2)!}$
\end{center}
\end{lemma}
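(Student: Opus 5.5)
The plan is to read the symbol $\approx$ in the sense of asymptotic equivalence: for fixed $k\ge 3$,
\[
\binom{n-2}{k-2}\Big/\frac{n^{k-2}}{(k-2)!}\longrightarrow 1 \quad\text{as } n\to\infty,
\]
or equivalently $\binom{n-2}{k-2}=\frac{n^{k-2}}{(k-2)!}\bigl(1+o(1)\bigr)$. The starting point is the exact identity of Lemma \ref{newla}, which reduces everything to showing that the finite product $\prod_{r=2}^{k-1}\bigl(1-\frac{r}{n}\bigr)$ tends to $1$.

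First, I will note that the product has exactly $k-2$ factors. This number is independent of $n$ because $k$ is fixed. Each factor $1-\frac{r}{n}$, for $2\le r\le k-1$, tends to $1$ as $n\to\infty$. The limit of a product of a fixed, finite number of convergent sequences is the product of the limits, so the whole product tends to $1$. Dividing the identity of Lemma \ref{newla} by $\frac{n^{k-2}}{(k-2)!}$ then gives the ratio statement directly.

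To make the error term explicit, I will also record a quantitative two-sided bound. Assume $n\ge k$, so that every factor lies in $(0,1]$. Then the Weierstrass product inequality $\prod(1-x_r)\ge 1-\sum x_r$ gives
\[
1-\frac{1}{n}\sum_{r=2}^{k-1} r\;\le\;\prod_{r=2}^{k-1}\Bigl(1-\frac{r}{n}\Bigr)\;\le\;1 .
\]
The sum equals $\frac{(k-1)k}{2}-1$, a constant. Hence the product equals $1-O(1/n)$, and so
\[
\binom{n-2}{k-2}=\frac{n^{k-2}}{(k-2)!}\bigl(1+O(1/n)\bigr),
\]
which is the form I expect to be useful later, when $\mathbb E[c_{ij}]=p\binom{n-2}{k-2}$ is estimated.

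There is no real obstacle in this argument. The only point that needs care is that $k$ must be held fixed while $n\to\infty$; if $k$ were allowed to grow with $n$, the number of factors would grow and the conclusion could fail. I will state this uniformity caveat explicitly in the proof.
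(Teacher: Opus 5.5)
Your proof is correct. It starts, as the paper does, from the exact product formula of Lemma \ref{newla}, but it controls the product $\prod_{r=2}^{k-1}\bigl(1-\frac{r}{n}\bigr)$ differently.

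The paper takes logarithms and expands $\log\bigl(1-\frac{r}{n}\bigr)=-\frac{r}{n}+O(n^{-2})$. Summing gives $-\frac{k^2-k-2}{2n}+O(n^{-2})$. It then exponentiates through the Taylor series of $\exp$, which means the whole infinite tail of that series must be shown to be $O(n^{-2})$.

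You instead observe that a fixed finite number of factors each tending to $1$ has product tending to $1$. For the quantitative version you use the Weierstrass product inequality, which gives the explicit sandwich
$1-\frac{k^2-k-2}{2n}\le\prod_{r=2}^{k-1}\bigl(1-\frac{r}{n}\bigr)\le 1$ for every $n\ge k$.

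What your route buys:
\begin{itemize}
\item It is shorter and purely elementary.
\item It needs no series manipulations.
\item It gives non-asymptotic bounds with explicit constants.
\item It is all that Lemma \ref{AA1} and Theorem \ref{new} later use.
\end{itemize}

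What the paper's log--exp expansion buys is slightly more precision: the exact first-order term, $\prod_{r=2}^{k-1}\bigl(1-\frac{r}{n}\bigr)=1-\frac{k^2-k-2}{2n}+O(n^{-2})$. Your upper bound is only $1$, but that extra precision is never needed in the paper.
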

\proof Let us consider the logarithm of the expression $\prod\limits_{r=2}^{k-1}\big(1-\frac{r}{n}\big)$ appearing in the right-hand side of the formula in Lemma \ref{newla},
\begin{align*}
\log\Big(\prod\limits_{r=2}^{k-1}\big(1-\frac{r}{n}\big)\Big)
&=\sum\limits_{r=2}^{k-1}\log\big(1-\frac{r}{n}\big),\\
&=-\sum\limits_{r=2}^{k-1}\Big(\frac{r}{n}+O(\frac{1}{n^2})\Big),\\
&=-\frac{\sum\limits_{r=2}^{k-1}r}{n}+O(\frac{1}{n^2}),\\
&=-\frac{k^2-k-2}{2n}+O(\frac{1}{n^2}).
\end{align*}
Applying the exponentiating, 
\begin{equation}\label{b2}
\prod\limits_{r=2}^{k-1}\big(1-\frac{r}{n}\big)=\exp\Big(-\frac{k^2-k-2}{2n}+O(\frac{1}{n^2})\Big).
\end{equation}
Using the first-order Taylor expansion for small $x$,
\begin{align*}
\exp(x)
&=1+\frac{x}{1!}+\frac{x^2}{1!}+\frac{x^3}{1!}+\frac{x^4}{1!}+\cdots.
\end{align*}
Substitute $x=-\frac{k^2-k-2}{2n}+O(\frac{1}{n^2})$ into the Taylor expansion of $\exp(x)$,
\small{
\begin{align*}
\exp\Big(-\frac{k^2-k-2}{2n}+O(\frac{1}{n^2})\Big)
&=1+\frac{-\frac{k^2-k-2}{2n}+O(\frac{1}{n^2})}{1!}+\frac{\Big(-\frac{k^2-k-2}{2n}+O(\frac{1}{n^2})\Big)^2}{2!}+\cdots,\\
&=1+\Big((-\frac{k^2-k-2}{2n})+O(\frac{1}{n^2})\Big)+\frac{1}{2}\Big(-\frac{k^2-k-2}{2n}+O(\frac{1}{n^2})\Big)^2+\cdots,\\
\end{align*}}
Since the term $\frac{1}{p!}\Big(-\frac{k^2-k-2}{2n}+O(\frac{1}{n^2})\Big)^p=O(\frac{1}{n^2})$, for all $p=2,3,\ldots$
\begin{align*}
=1-\frac{k^2-k-2}{2n}+O(\frac{1}{n^2}).
\end{align*}
Therefore,
\begin{equation}\label{Bbb3}
\exp\Big(-\frac{k^2-k-2}{2n}+O(\frac{1}{n^2})\Big)=1+O(\frac{1}{n}).
\end{equation} 
Using equations \eqref{b2}, \eqref{Bbb3}, and Lemma \ref{newla} we have
\begin{center}
$\binom{n-2}{k-2}=\frac{n^{k-2}}{(k-2)!}\big(1+O(\frac{1}{n})\big).$
\end{center}
Since $O(\frac{1}{n})$ is goes to zero as $n\rightarrow\infty$ i.e., for large $n$, which implies that asymptotically $\binom{n-2}{k-2}$ behaves like 
\begin{center}
$\binom{n-2}{k-2}\approx\frac{n^{k-2}}{(k-2)!}$
\end{center}
This completes the proof.\qe

Chernoff’s inequality is a fundamental result in probability theory, which dates back to \cite{HC1952}, 1952. The following lemma is known as Chernoff's inequality.

\begin{lemma}\rm\cite[Chernoff’s inequality]{HC1952}\label{C1}
For a binomial random variable $X$ with mean $\mu$ and for any $\epsilon>0$:
\[
\mathbb{P}\Big(|X-\mu|>\epsilon\mu\Big)\leq2~\exp(-\frac{\epsilon^2\mu}{3}).
\]
\end{lemma}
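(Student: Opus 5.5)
The plan is the classical moment-generating-function (Bernstein–Chernoff) argument, proving the two one-sided tails separately and combining them with a union bound. One caveat first: the bound $2\exp(-\epsilon^2\mu/3)$ is the standard form for $0<\epsilon\le 1$. For large $\epsilon$ the true upper tail decays only like $\exp(-c\,\epsilon\log\epsilon\,\mu)$, so the statement should be read with $0<\epsilon\le 1$, which is the regime in which it will be applied to the co-degrees $c_{ij}$. For $\epsilon>1$ the argument below gives only the weaker bound $\exp(-\epsilon\mu/3)$.

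Write $X=\sum_{s=1}^{M}X_s$ with the $X_s$ independent Bernoulli$(p)$ variables, so that $\mu=Mp$. For any real $t$, independence and $1+x\le e^{x}$ give
\[
\mathbb{E}[e^{tX}]=\prod_{s=1}^{M}\bigl(1+p(e^t-1)\bigr)\le \exp\bigl(\mu(e^t-1)\bigr).
\]
For the upper tail, take $t>0$ and apply Markov's inequality to $e^{tX}$:
\[
\mathbb{P}\bigl(X>(1+\epsilon)\mu\bigr)\le \exp\bigl(\mu(e^t-1)-t(1+\epsilon)\mu\bigr).
\]
Optimizing with $t=\ln(1+\epsilon)$ gives $\exp\bigl(-\mu[(1+\epsilon)\ln(1+\epsilon)-\epsilon]\bigr)$. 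For the lower tail, take $t=-\ln(1-\epsilon)$ (if $\epsilon=1$ argue directly, or let $t\to\infty$, since the event $X<0$ is empty). Markov applied to $e^{-tX}$ then gives $\mathbb{P}\bigl(X<(1-\epsilon)\mu\bigr)\le \exp\bigl(-\mu[\epsilon+(1-\epsilon)\ln(1-\epsilon)]\bigr)$.

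The remaining step, which I expect to be the only real work, is to reduce each exponent to $\epsilon^2/3$ by elementary calculus. For the upper tail, set $f(\epsilon)=(1+\epsilon)\ln(1+\epsilon)-\epsilon-\epsilon^2/3$. Then $f(0)=0$ and $f'(\epsilon)=\ln(1+\epsilon)-2\epsilon/3$. This derivative vanishes at $0$ and is concave, and $f'(1)=\ln 2-2/3>0$. A concave function that is nonnegative at both endpoints is nonnegative on $[0,1]$, so $f'\ge 0$ there and hence $f\ge 0$ on $[0,1]$. For the lower tail, the expansion $\epsilon+(1-\epsilon)\ln(1-\epsilon)=\sum_{m\ge2}\frac{\epsilon^m}{m(m-1)}\ge \epsilon^2/2\ge\epsilon^2/3$ settles it.

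Finally, the event $|X-\mu|>\epsilon\mu$ is the union of the two tail events, so the union bound gives
\[
\mathbb{P}\bigl(|X-\mu|>\epsilon\mu\bigr)\le 2\exp\Bigl(-\frac{\epsilon^2\mu}{3}\Bigr).
\]
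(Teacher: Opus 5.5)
The paper does not prove this lemma; it only cites Chernoff's 1952 paper, so your argument supplies a proof rather than competing with one. It is the standard moment-generating-function derivation, and every step checks out:
\begin{itemize}
\item $\mathbb{E}[e^{tX}]\le\exp(\mu(e^t-1))$ holds for all real $t$.
\item The choices $t=\ln(1+\epsilon)$ and $t=-\ln(1-\epsilon)$ give exactly your exponents.
\item The concavity argument for $\ln(1+\epsilon)-2\epsilon/3$ on $[0,1]$ works since $\ln 2>2/3$.
\item The series $\sum_{m\ge2}\epsilon^m/(m(m-1))$ settles the lower tail.
\end{itemize}

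Beyond making the result self-contained, your version has the correct hypothesis. Your caveat is not excess caution: the lemma as printed, for every $\epsilon>0$, is false. Take $X\sim\mathrm{Binomial}(1,0.01)$, so $\mu=0.01$, and $\epsilon=49$. Then $\mathbb{P}(|X-\mu|>\epsilon\mu)=\mathbb{P}(X=1)=0.01$, whereas $2\exp(-\epsilon^2\mu/3)=2e^{-24.01/3}\approx 6.7\times10^{-4}$.

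The multiplicative form with constant $3$ is the later textbook version (e.g.\ Corollary 2.3 in Janson, {\L}uczak and Ruci\'nski, \emph{Random Graphs}), which is stated for $0<\epsilon\le 3/2$. Your exponent $(1+\epsilon)\ln(1+\epsilon)-\epsilon$ still dominates $\epsilon^2/3$ on that whole range, but $0<\epsilon\le 1$ is all the paper needs.

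The restriction costs nothing in the paper's only application, Lemma \ref{AA1}. There $\epsilon=\sqrt{9\log n/(p\binom{n-2}{k-2})}\to 0$, so $\epsilon\le 1$ for all large $n$ and the asymptotic conclusion is unaffected.
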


The following lemma guarantees that every pair of $(i,j)\in V$, $c_{ij}$ is very close to its expected value $p\binom{n-2}{k-2}$ with overwhelming probability.

\begin{lemma}\label{AA1}
Let $\mathcal{H}=(V,E)\sim\mathcal{R}(n,k,p)$ be a random $k$-uniform hypergraph on $n$ vertices with
fixed integers $k\ge 3$ and $p\in(0,1)$. If $i\neq j$ then
\begin{center}
$c_{ij}=p\Big(1+o(1)\Big)\binom{n-2}{k-2}$.   
\end{center}
for almost all $k$-uniform hypergraphs.
\end{lemma}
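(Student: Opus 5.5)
The plan is to combine Remark \ref{reeee}, Chernoff's inequality (Lemma \ref{C1}), and a union bound over all $\binom{n}{2}$ pairs. I interpret the statement in its strong, uniform form: with probability tending to $1$ as $n\to\infty$, every pair $i\neq j$ satisfies $|c_{ij}-\mu|\le \epsilon_n\mu$, where $\mu=p\binom{n-2}{k-2}$ and $\epsilon_n\to 0$. The pointwise statement for a single fixed pair then follows immediately.

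\textbf{Step 1.} By Remark \ref{reeee}, each $c_{ij}$ is distributed as $\mathrm{Binomial}\big(\binom{n-2}{k-2},p\big)$ with mean $\mu=p\binom{n-2}{k-2}$. By Lemma \ref{ll}, for fixed $k\ge3$ and fixed $p\in(0,1)$,
\[
\mu=\frac{p\,n^{k-2}}{(k-2)!}\bigl(1+O(1/n)\bigr).
\]
In particular, $\mu\ge c\,n$ for some constant $c=c(k,p)>0$ and all large $n$. This is the only place where $k\ge3$ is used, and it is essential: when $k=2$ the mean is $p$, so there is no concentration.

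\textbf{Step 2.} Apply Lemma \ref{C1} to a fixed pair with a vanishing deviation parameter $\epsilon_n$. A concrete choice is $\epsilon_n=\sqrt{9\log n/\mu}$, which tends to $0$ because $\mu\ge cn$. Chernoff's inequality gives
\[
\mathbb{P}\bigl(|c_{ij}-\mu|>\epsilon_n\mu\bigr)\le 2\exp\!\left(-\frac{\epsilon_n^2\mu}{3}\right)=2n^{-3}.
\]
Equivalently, one could take $\epsilon_n=n^{-1/4}$ (for $k\ge3$) and obtain the bound $2\exp(-c\sqrt{n}/3)$.

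\textbf{Step 3.} Take a union bound over the $\binom{n}{2}<n^2$ unordered pairs. The probability that some pair deviates by more than $\epsilon_n\mu$ is at most $2n^{-1}\to0$. Hence, with probability tending to $1$, every $c_{ij}$ lies in $\bigl[(1-\epsilon_n)\mu,(1+\epsilon_n)\mu\bigr]$, that is, $c_{ij}=p(1+o(1))\binom{n-2}{k-2}$ simultaneously for all $i\neq j$. Here the $o(1)$ term is the deterministic sequence $\epsilon_n$, which gives the statement in the sense of ``almost all'' hypergraphs.

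\textbf{Main obstacle.} The main subtlety is tuning $\epsilon_n$: it must tend to $0$ while $\epsilon_n^2\mu$ still grows faster than $2\log n$, so that the union bound over roughly $n^2$ pairs succeeds. Lemma \ref{C1} is usually stated for $0<\epsilon\le1$, which our $\epsilon_n$ satisfies for large $n$. The growth requirement holds precisely because Lemma \ref{ll} makes $\mu$ polynomial in $n$ of degree $k-2\ge1$. I would verify this growth carefully in the case $k=3$, where $\mu\asymp pn$ and the margin is smallest.
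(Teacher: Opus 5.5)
Your proposal is correct and follows essentially the same route as the paper: the same choice $\epsilon_n=\sqrt{9\log n/\mu}$ in Chernoff's inequality gives $2n^{-3}$ per pair, a union bound is taken over the $\binom{n}{2}$ pairs, and Lemma \ref{ll} shows $\epsilon_n\to0$ for $k\ge3$. Your remark that the two-sided Chernoff bound is normally stated only for $0<\epsilon\le 1$, which holds here for large $n$, adds a bit of care that the paper's statement of Lemma \ref{C1} omits.
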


\proof From Remark \ref{reeee} (ii),
\[
c_{ij}\sim\mathrm{Binomial}\Bigl(\binom{n-2}{k-2},\,p\Bigr),
\quad
\mu:=\mathbb{E}[c_{ij}]=p\binom{n-2}{k-2}.
\]
Since $\epsilon>0$ is arbitrary, let us take $\epsilon=\sqrt{\left(\frac{9\log n}{p\binom{n-2}{k-2}}\right)}$. Applying Lemma \ref{C1},
\[
\mathbb{P}\big(|c_{ij}-\mu|>\epsilon\mu\big)
\le 2\exp\left(-\frac{\epsilon^2\mu}{3}\right)
= 2\exp\bigl(-3\log n\bigr)
= \frac{2}{n^3}.
\]
Let us define the event $\mathcal{U}_{ij}$ for each pair of vertices $\{i,j\}$ to be
\[
\mathcal{U}_{ij}:=\big|c_{ij}-\mu\big|>\epsilon\mu.
\]
There are $\binom{n}{2}$ unordered pairs $\{i,j\}$. By the union bound of probability, we have
\begin{align*}
\mathbb{P}\Big(\bigcup_{i<j}\mathcal{U}_{ij}\Big)
&\le \sum_{i<j}\mathbb{P}(\mathcal{U}_{ij}),\\
&\le \binom{n}{2}\cdot\frac{2}{n^3},\to0~~\text{as}~n\to\infty\\
&= o(1).
\end{align*}
Thus, with probability $1-o(1)$, the complement of this union occurs, i.e.,
\[
\big|c_{ij}-\mu\big|\le \epsilon\mu
\quad\text{for all }1\le i<j\le n.
\]
Equivalently with probability tending to $1$ as $n\to\infty$, we have
\[
c_{ij} = \mu(1\pm\epsilon)
= p\binom{n-2}{k-2}(1\pm\epsilon)
\quad\text{for all }i\ne j.
\]
Now we need to show that $\epsilon=o(1)$. By Lemma \ref{ll},
\[
\binom{n-2}{k-2}\approx\frac{n^{k-2}}{(k-2)!},
\]
so
\[
\epsilon^2
=\frac{9\log n}{p\binom{n-2}{k-2}}
\approx \frac{9(k-2)!}{p}\frac{\log n}{n^{k-2}}\to0~~\text{as}~n\to\infty,
\]
and hence
\[
\epsilon
= o(1)~~\text{for}~k\ge 3.
\]
Therefore
\[
c_{ij} = p\binom{n-2}{k-2}\bigl(1\pm \epsilon\bigr)=p\binom{n-2}{k-2}\bigl(1+o(1)\bigr),
\]
uniformly over all pairs $\{i,j\}$, since the right-hand side does not depend on $i,j$ as claimed.\qe

The following theorem answers \textbf{Question \ref{trtrtr}}.
\begin{theorem}\label{new}
Let $k\ge3$. Almost all $k$-uniform hypergraphs are Seidel non-hypoenergetic.    
\end{theorem}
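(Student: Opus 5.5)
Our plan is to work in the model $\mathcal{R}(n,k,p)$ with fixed $k\ge3$ and fixed $p\in(0,1)$, and to show that the event $\mathcal{E}(\mathcal S(\mathcal H))\ge n$ has probability tending to $1$ as $n\to\infty$. The tool is the lower bound $\mathcal{E}(\mathcal S(\mathcal H))\ge \|\mathcal S(\mathcal H)\|_F$ from Lemma \ref{lem:sand}. So it suffices to show that, with high probability, $\|\mathcal S(\mathcal H)\|_F\ge n$.

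\textbf{Step 1: entrywise control.} By Lemma \ref{AA1}, with probability $1-o(1)$ we have $c_{ij}=p\binom{n-2}{k-2}(1+o(1))$ uniformly over all pairs $i\ne j$. By Lemma \ref{ll}, $p\binom{n-2}{k-2}\to\infty$ as $n\to\infty$, since $k\ge3$. Hence on this event, for $n$ large, every off-diagonal entry satisfies
\[
|1-2c_{ij}| = 2c_{ij}-1 \ge p\binom{n-2}{k-2}.
\]
This uses $2(1+o(1))p\binom{n-2}{k-2}-1\ge p\binom{n-2}{k-2}$ once $p\binom{n-2}{k-2}\ge 2$, say.

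\textbf{Step 2: Frobenius bound.} Summing over the $n(n-1)$ off-diagonal entries gives
\[
\|\mathcal S(\mathcal H)\|_F\ge \sqrt{n(n-1)}\,p\binom{n-2}{k-2}.
\]
Here $\binom{n-2}{k-2}\ge n-2$ for $k\ge3$ and $n\ge k+1$, so the right side is of order $p\,n^{k-1}/(k-2)!$, which exceeds $n$ for all large $n$. Combining this with Lemma \ref{lem:sand} gives $\mathcal{E}(\mathcal S(\mathcal H))\ge n$ on an event of probability $1-o(1)$. That is the statement.

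\textbf{Main obstacle, and a fallback.} The main obstacle is Step 1, namely making the use of Lemma \ref{AA1} rigorous. It relies on the Chernoff bound with $\epsilon=o(1)$, which is only valid when $p\binom{n-2}{k-2}\gg \log n$; this holds since $k\ge3$. A simpler route, if one prefers to avoid concentration entirely, is also available. By Theorem \ref{lem:lin}, $\|\mathcal S(\mathcal H)\|_F\ge\sqrt{n(n-1)}$ always, but this falls just short of $n$. So we only need one pair with $c_{ij}\ge2$: by Lemma \ref{lem:Frob}, this adds at least $8$ to $\|\mathcal S(\mathcal H)\|_F^2$, and $n(n-1)+8\ge n^2$ fails only for $n>8$. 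Hence the deterministic fallback needs many pairs with $c_{ij}\ge2$, for example at least $n/8$ of them. With high probability all pairs have this property, by Lemma \ref{AA1}, and we would present the argument of Steps 1 and 2 as the main line.
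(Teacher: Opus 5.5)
Your main line is correct and is essentially the paper's proof: you use the uniform co-degree concentration of Lemma \ref{AA1} to bound $\|\mathcal S(\mathcal H)\|_F$ from below by a quantity of order $n^{k-1}$, and then conclude with $\mathcal E(\mathcal S(\mathcal H))\ge\|\mathcal S(\mathcal H)\|_F$ from Lemma \ref{lem:sand}. Your explicit entrywise bound $2c_{ij}-1\ge p\binom{n-2}{k-2}$, combined with $\binom{n-2}{k-2}\ge n-2$, finishes more cleanly than the paper's chain of $(1+o(1))$ and $\approx$ manipulations.
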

\proof The Seidel matrix of a hypergraph $\mathcal{H}=(V,~E)$ is $\mathcal{S}(\mathcal{H})=J_n-I_n-2\mathcal{A}(\mathcal{H})$. Let $i,j\in V$. The $ij$-th entry of the Seidel matrix $\mathcal{S}(\mathcal{H})=[\mathcal{S}(\mathcal{H})_{ij}]_{n\times n}$ is 
\begin{align*}
\mathcal{S}(\mathcal{H})_{ij}=1-2c_{ij},~\text{for}~i\neq j  \end{align*}
By Lemma \ref{AA1}, uniformly for all $i\neq j$,
\begin{align*}
\mathcal{S}(\mathcal{H})_{ij}
&=1-2p\binom{n-2}{k-2}+o\bigg(p\binom{n-2}{k-2}\bigg).
\end{align*}
Then
\begin{align*}
\|\mathcal S(\mathcal H)\|_F^2
&=\sum_{i\neq j}\mathcal S(\mathcal H)_{ij}^2\\
&=\sum_{i\neq j}\Bigg[1-2p\binom{n-2}{k-2}+o\bigg(p\binom{n-2}{k-2}\bigg)\Bigg]^2\\
&=n(n-1)\Bigg[1-2p\binom{n-2}{k-2}+o\bigg(\Big(2p\binom{n-2}{k-2}-1\Big)\bigg)\Bigg]^2\\
&=n(n-1)\Bigg[1-2p\binom{n-2}{k-2}+\bigg(2p\binom{n-2}{k-2}-1\bigg)o(1)\Bigg]^2\\
&=n(n-1)\bigg(2p\binom{n-2}{k-2}-1\bigg)^2\big(1+o(1)\big).
\end{align*}
Therefore,
\[
\|\mathcal S(\mathcal H)\|_F
=\bigg|2p\binom{n-2}{k-2}-1\bigg|\sqrt{n(n-1)\big(1+o(1)\big)}.
\]
By Lemma \ref{lem:sand}, $\mathcal E(\mathcal S(\mathcal H))\ge \|\mathcal S(\mathcal H)\|_F$, implies
\begin{equation}\label{is}
\mathcal E(\mathcal S(\mathcal H))
\ge\bigg|2p\binom{n-2}{k-2}-1\bigg|\sqrt{n(n-1)\big(1+o(1)\big)}.    
\end{equation}
Moreover, $p\in(0,1)$ is fixed and $k\ge 3$, we have $p\binom{n-2}{k-2}\to\infty$ as $n\to\infty$,
\begin{align*}
\bigg|2p\binom{n-2}{k-2}-1\bigg|
&=\bigg|2p\binom{n-2}{k-2}\Big(1-\frac{1}{2p\binom{n-2}{k-2}}\Big)\bigg|,\\
&=2p\binom{n-2}{k-2}\big(1+o(1)\big).
\end{align*}
By equation \eqref{is}, 
\[
\mathcal E(\mathcal S(\mathcal H))
\ge 2p\binom{n-2}{k-2}\sqrt{n(n-1)}\,\big(1+o(1)\big)^{\frac{3}{2}}.
\]
By Lemma \ref{ll}, 
\[
\binom{n-2}{k-2}\approx\frac{n^{k-2}}{(k-2)!},
\]
we obtain
\[
2p\binom{n-2}{k-2}\sqrt{n(n-1)}\approx\frac{2p}{(k-2)!}n^{k-2}\cdot n=\frac{2p}{(k-2)!}n^{k-1}.
\]
Since $k\ge 3$, we have $k-1\ge 2$, and therefore for sufficiently large $n$,
\[
\mathcal{E}(\mathcal{S}(\mathcal{H}))> n.
\]
This completes the proof.\qe

\section{Concluding remark}
A graph $G$ on $n$ vertices is called hyperenergetic if $\mathcal{E}(G)>\mathcal{E}(K_n) = 2n-2$ \cite[Definition~7.1]{IG2001}. In 2001, Gutman showed that there exist many graphs whose energy is more than the energy of the complete graphs, i.e., $n$-vertex graphs for which $\mathcal{E}(G)>\mathcal{E}(K_n)$ \cite[Proposition 7.2, Theorem 7.6, Corollary 7.8]{IG2001}. Though the characterization of the $n$-vertex graph(s) with maximum value of $\mathcal{E}$ is an open problem \cite[Abstract]{IG2001}. In the recent proof of \emph{Haemers’ Conjecture} for graph \cite{S2020, E2024}, the notion of hyperenergetic graphs does not play a significant role, since with respect to the Seidel matrix of a graph, every graph automatically satisfies the hyperenergetic property. However, Theorem \ref{thm:haemers-fails} shows that \emph{Haemers’ Conjecture} fails for every $k$-uniform hypergraph $\mathcal{H}$ when the Seidel matrix $\mathcal{S}(\mathcal{H})$ is used. This naturally motivates extending the concept of hyperenergeticity to the hypergraph setting via Seidel energy.

Accordingly, a hypergraph $\mathcal{H}$ on $n$ vertices is called \emph{Seidel hyperenergetic} if $\mathcal{E}\!\left(\mathcal{S}(\mathcal{H})\right)>\mathcal{E}\!(\mathcal{S}(\mathcal{C}^k_n))$ and \emph{Seidel non-hyperenergetic} if $\mathcal{E}\!\left(\mathcal{S}(\mathcal{H})\right)<\mathcal{E}\!(\mathcal{S}(\mathcal{C}^k_n))$.

We conclude with several open questions.
\begin{question}
Characterize all $k$-uniform hypergraphs that are Seidel hyperenergetic and those that are Seidel non-hyperenergetic.
\end{question}

\begin{question}
Characterize the $k$-uniform hypergraph that maximizes (minimizes) the Seidel energy among all $k$-uniform hypergraphs with $n$ vertices.
\end{question}

\begin{question}
Characterize the $r$-regular $k$-uniform hypergraph that maximizes (minimizes) the Seidel energy among all $r$-regular $k$-uniform hypergraphs with $n$ vertices.
\end{question}

\begin{question}
Characterize the $r$-regular $k$-uniform hypergraph that maximizes (minimizes) the Frobenius norm of the Seidel matrix among all $r$-regular $k$-uniform hypergraphs with $n$ vertices.
\end{question}

\section*{Declaration of competing interest}
The authors declare that they have no competing interests.

\section*{Acknowledgements}
The authors express their sincere gratitude to the Department of Mathematics at Bar-Ilan University. The second author also gratefully acknowledges the postdoctoral financial support provided by Bar-Ilan University, Israel.

\section*{Data availability}
No data were used in the research described in this article.

\end{document}